\DeclarePairedDelimiterX\abs[1]\lvert\rvert{\ifblank{#1}{\,\cdot\,}{#1}}
\DeclarePairedDelimiterX\norm[1]\lVert\rVert{\ifblank{#1}{\,\cdot\,}{#1}}
\DeclarePairedDelimiterXPP\onenorm[1]{}\lVert\rVert{_1}{\ifblank{#1}{\,\cdot\,}{#1}}
\DeclarePairedDelimiterXPP\cbnorm[1]{}\lVert\rVert{_{\mathrm{cb}}}{\ifblank{#1}{\,\cdot\,}{#1}}
\DeclarePairedDelimiterX\eval[2]\langle\rangle{%
  \ifblank{#1}{\,\cdot\,}{#1},\ifblank{#2}{\,\cdot\,}{#2}%
}
\newcommand\bded[2]{\mathcal{B}%
\ifstrequal{#1}{#2}{(#1)}{(#1, #2)}}
\newcommand\Z{\mathbb{Z}}
\newcommand\N{\mathbb{N}}
\newcommand\C{\mathbb{C}}
\newcommand\T{\mathbb{T}}
\newcommand\Dbar{\overline{\mathbb{D}}}
\newcommand\ri{\mathrm{i}}
\newcommand\re{\mathrm{e}}
\newcommand{\dm}{\mathop{}\!\mathrm{d}}
\newcommand\adj{^*}
\newcommand\dadj{^{**}}
\newcommand\dual{^*}
\newcommand\ddual{^{**}}
\newcommand\annih{^{\perp}}
\newcommand\inv{^{-1}}
\newcommand\id{\mathrm{id}}
\newcommand\charspace[1]{\Phi_{#1}}
\newcommand\pe[1]{\varepsilon_{#1}}
\newcommand\shilov[1]{\Gamma(#1)}
\newcommand\choquet[1]{\Gamma_0(#1)}
\DeclareMathOperator\supp{supp}
\DeclareMathOperator\tint{int}
\DeclareMathOperator\cconv{\overline{conv}}
\newcommand\pushforward[2]{{#1}_{\ast}(#2)}
\newtheorem{theorem}{Theorem}[section]
\newtheorem{proposition}[theorem]{Proposition}
\newtheorem{corollary}[theorem]{Corollary}
\newtheorem{lemma}[theorem]{Lemma}
\newtheorem{question}[theorem]{Question}
\theoremstyle{definition}
\newtheorem{definition}[theorem]{Definition}
\newtheorem*{acknowledgements}{Acknowledgements}
\theoremstyle{remark}
\newtheorem{remark}[theorem]{Remark}
\newtheorem{example}[theorem]{Example}
\begin{document}

\title{Extensions of uniform algebras}
\author{Sam Morley}


\maketitle
\begin{abstract}
The theory of algebraic extensions of Banach algebras is well established, and there are many constructions which yield interesting extensions. In particular, Cole's method for extending uniform algebras by adding square roots of functions to a given uniform algebra has been used to solve many problems within uniform algebra theory. However, there are numerous other examples in the theory of uniform algebras that can be realised as extensions of a uniform algebra, and these more general extensions have received little attention in the literature. In this paper, we investigate more general classes of uniform algebra extensions. We introduce a new class of extensions of uniform algebras, and show that several important properties of the original uniform algebra are preserved in these extensions. We also show that several well-known examples from the theory of uniform algebras belong to these more general classes of uniform algebra extensions.
\end{abstract}

Many important examples of uniform algebras can be realised as an extension of a given uniform algebra, such as the counterexamples to the peak point conjecture given by Cole \cite{cole1968} and Basener \cite{basener1971}. In the former, it is explicit that an extension of uniform algebra is constructed, whilst in the latter this fact is less obvious. In fact, it can be shown that both of these examples belong to an interesting class of extensions of uniform algebras, which we introduce in this paper. Extensions that belong to this class have numerous desirable properties and, perhaps most importantly, have the property that the extension is a non-trivial uniform algebra if the original uniform algebra was non-trivial (see Section~\ref{sec:preliminaries} for definitions). In this paper, we develop a general theory of uniform algebra extensions that will be useful in future constructions.

Much of the literature on extensions of uniform algebras---and, more generally, of commutative Banach algebras---has focused on algebraic extensions. (The study of algebraic extensions of commutative Banach algebras originates in work of Arens and Hoffman \cite{Arens1956}, see also \cite{lindberg1964,lindberg1973, dawson2003, karahanjan1978,narmaniya1982}.) These algebraic extensions of uniform algebras, such as Cole extensions (see Section~\ref{sec:Cole extensions}), and their properties are generally well-understood, while more general classes of uniform algebra extensions have received relatively little attention. Many results proven for for algebraic extensions of uniform algebras can be generalised to a larger class of extensions.

We are especially interested in the properties of a uniform algebra that are inherited by any uniform algebra extension, such as being non-trivial, natural, regular, or normal. We are also interested in the relationship between peak sets (in the weak sense) and the Shilov boundary of a uniform algebra extension to those of the original uniform algebra. Cole's construction yields a uniform algebra extension that preserves many of the properties of the original uniform algebra. This is largely due to the existence of a norm one linear map that maps the Cole extension onto the original algebra. Several examples in the theory of uniform algebras use Cole's construction, for example in \cite{feinstein2004,feinstein2001,feinstein1992}.

Many recent examples in uniform algebra theory can be realised---or are explicitly constructed---as uniform algebra extensions. For example, in \cite{dales2018}, Dales, Feinstein, and Pham construct a uniform algebra that contains a maximal idea in which all null sequences factor but does not contain a bounded approximate identity. In \cite{feinstein2015}, Feinstein and Izzo construct give a construction that yields essential uniform algebras.

In Section~\ref{sec:generalised Cole extensions}, we introduce a class of uniform algebra extensions, motivated by Cole's construction, which we call generalised Cole extensions. These extensions are distinguished by the existence of a norm one linear map onto the original uniform algebra. We show that these generalised Cole extensions preserve several properties of the original uniform algebra; in particular, they preserve non-triviality. The class of generalised Cole extensions is larger than the class of Cole extensions in general, although we do provide some special conditions under which a generalised Cole extension is necessarily a Cole extension (Theorem~\ref{thm when gce is ce}). These conditions are closely related to the solution to the ``bicontractive/symmetric projection problem'' for uniform algebras discussed in \cite{blecher2015}. We also show that Basener's counterexample to the peak point conjecture can also be realised as a generalised Cole extension of a certain uniform algebra.

The outline of the paper is as follows. We first fix our notation, terminology, and provide some elementary results on averaging operators. We the investigate general uniform algebra extensions (Section~\ref{sec:uniform algebra extensions}), followed by a short description of (simple) Cole extensions (Section~\ref{sec:Cole extensions}). Finally, we introduce our new class of uniform algebra extensions and discuss their properties (Section~\ref{sec:generalised Cole extensions}). We discuss how, in some cases, the action of a compact group yields the norm one linear map onto the original algebra. We show that, under certain conditions, this is always the case. Finally, we show that Basener's example belongs to this new class of extension, and that the norm one linear map arises due to the action of a compact group.

\section{Preliminaries}
\label{sec:preliminaries}
Throughout this paper, we say {\em compact space} to mean a non-empty, compact, Hausdorff topological space, we say {\em measure} to mean a regular, complex, Borel measure, and we denote the set of positive integers by $\N$.

Let $\mathfrak X$ be a Banach space, and let $F$ be a closed linear subspace of $\mathfrak X$. We write $\mathfrak X\dual$ for the {\em topological dual} of $\mathfrak X$, and we write $F\annih$ for the {\em annihilator} of $F$ in $\mathfrak X\dual$; that is $F\annih$ is the space of all linear functionals $\lambda\in \mathfrak X\dual$ such that $\lambda(F)\subseteq\{0\}$.

Let $A$ be complex, commutative, unital Banach algebra. We denote the character space of $A$ by $\charspace A$. We give $\charspace A$ the relative weak-$*$ topology so that $\charspace A$ is a compact space. We write $\widehat a$ for the Gelfand transform of $a\in A$, which is a continuous function from $\charspace A$ into $\C$; we write $\widehat A$ for the set $\{\widehat a:a\in A\}$, which is an algebra of continuous, complex-valued functions on $\charspace A$.

Let $X$ be a compact space. We denote the algebra (with pointwise operations) of all continuous, complex-valued functions on $X$ by $C(X)$. For each non-empty $E\subseteq X$, we write
\[
\abs f_E:=\sup_{x\in E}{\abs{f(x)}}\qquad(f\in C(X)).
\]
Let $S$ be a subset of $C(X)$. We say that $S$ {\em separates the points of $X$} if, for each $x,y\in X$ with $x\neq y$, there exists $f\in S$ such that $f(x)\neq f(y)$. It is standard that $\abs{}_X$ a norm on $C(X)$, and that $C(X)$ is a commutative Banach algebra with this norm. We call $\abs{}_X$ the {\em uniform norm}.
A {\em uniform algebra on $X$} is a closed subalgebra $A$ of $C(X)$ such that $A$ separates the points of $X$ and $A$ contains all constant functions on $X$. Let $A$ be a uniform algebra on $X$. We say that $A$ is {\em trivial} if $A=C(X)$, otherwise $A$ is {\em non-trivial}. For each $x\in X$, let $\pe x$ denote the {\em point evaluation character at $x$}, given by $\pe x(f) = f(x)$ ($f\in A$). The map $x\mapsto \pe x$ ($x\in X$) identifies $X$ with a closed subset of $\charspace A$. We say that a uniform algebra $A$ is {\em natural on $X$} if all characters on $A$ correspond to point evaluations at points of $X$; that is, if $X=\charspace A$.

Let $E$ be a non-empty closed subset of $X$. We write $A|E$ for the algebra $\{f|E: f\in A\}$. The uniform closure in $C(E)$ of $A|E$ is a uniform algebra on $E$, which we denote by $A_E$.

We refer the reader to \cite{gamelin1984,stout1971,browder1969} for additional information on uniform algebras, and to \cite{dales2000} for additional information on Banach algebras.

We also require the following definitions.

\begin{definition}
Let $A$ be a uniform algebra on a compact space $X$. We say that $A$ is {\em normal on $X$} if for each compact set $K\subseteq X$ and each closed set $E\subseteq X$ with $E\cap K=\emptyset$, there exists $f\in A$ with $f(K)\subseteq\{1\}$ and $f(E)\subseteq\{0\}$; we say that $A$ is regular on $X$ if for each closed set $E\subseteq X$ and each point $x\in X\setminus E$, there exists $f\in A$ such that $f(x) = 1$ and $f(E)\subseteq \{0\}$.
\end{definition}

Let $A$ be a uniform algebra on a compact space $X$. Then $A$ is normal if and only if $A$ is natural and regular on $X$. (See \cite[Theorem~27.2]{stout1971}.)

\begin{definition}
Let $A$ be a uniform algebra on a compact space $X$, and let $E\subseteq X$. We say that $E$ is a {\em boundary (for $A$)} if, for all $f\in A$, there exists $x\in E$ such that $\abs{f(x)}=\abs{f}_X$.
We say that $E$ is a {\em closed boundary (for $A$)} if $E$ is a closed set which is a boundary.
The intersection of all closed boundaries for $A$ is called the {\em Shilov boundary (for $A$)} and is denoted by $\shilov A$.
We say that $E$ is a {\em peak set (for $A$)} if there exists $f\in A$ such that $f(x)=1$ $(x\in E)$ and $\abs{f(x)}<1$ $(x\in X\setminus E)$; in this case, we say that the function $f$ {\em peaks on $E$}.
We say that $E$ is a {\em peak set in the weak sense (for $A$)} if there is a family $\mathcal F$ of peak sets for $A$ such that $E=\bigcap_{F \in \mathcal F}F$.
Let $x\in X$. We say that $x$ is a {\em peak point (for $A$)} if $\{x\}$ is a peak set, and we say that $x$ is a {\em strong boundary point (for $A$)} if $\{x\}$ is a peak set in the weak sense; the set of all strong boundary points for $A$ is called the {\em Choquet boundary (for $A$)} and is denoted by $\choquet A$.
\end{definition}

It is standard that $\choquet A$ is a boundary for $A$, that $\shilov A$ is a closed boundary for $A$, and that $\shilov A$ is the closure of $\choquet A$. 

Here, and throughout this paper, we shall identify the dual space of $C(X)$, where $X$ is a compact space, with the set of all measures on $X$. We need the following standard terminology.

\begin{definition}
Let $A$ be a uniform algebra on a compact space $X$, and let $\varphi$ be a character on $A$. A {\em representing measure for $\varphi$} (with respect to $A$) is a probability measure $\mu$ on $X$ such that
\[
\varphi(f) = \int_X f\dm\mu\qquad(f\in A).
\]
A {\em Jensen measure for $\varphi$} (with respect to $A$) is a probability measure $\mu$ on $X$ such that
\[
\log{\abs{\varphi(f)}}\leq \int_X \log{\abs{f}}\dm\mu\qquad(f\in A).
\]
An {\em annihilating measure} for $A$ is a measure $\mu$ on $X$ such that
\[
\int_X f\dm\mu = 0
\]
for all $f\in A$; the space of annihilating measures for $A$ is identified with $A\annih$.
\end{definition}

Let $A$ be a uniform algebra on a compact space $X$, let $E\subseteq X$, and let $\varphi$ be a character on $A$. By Theorems~2.4.7 and 2.4.9 of \cite{browder1969}, $E$ is a peak set in the weak sense for $A$ if and only if, for every annihilating measure $\mu$ for $A$, the measure $\mu_E$ (given by $\mu_E(S) = \mu(E\cap S)$ for Borel sets $S\subseteq X$) is also an annihilating measure for $A$.   Every Jensen measure for $\varphi$ is a representing measure for $\varphi$, and $\varphi$ always admits a Jensen measure. We say that a Jensen measure $\mu$ for $\varphi$  is {\em trivial} if there exists $x\in X$ such that $\mu$ is the point-mass measure at $x$. In this case, $\varphi = \pe x$ for all $f\in A$.

\smallskip

Let $X$ and $Y$ be topological spaces, let $f:X\to Y$ be a continuous function, and let $F$ be a family of continuous functions from $Y$ into a topological space $Z$. We write $f\adj$ for the function $g\mapsto g\circ f$, which maps $F$ into the set of continuous functions from $X$ into $Z$. If $X$ and $Y$ are Banach spaces, $f$ is linear, $Z=\C$, and $F$ is a collection of bounded linear functionals on $Y$, then $f\adj$ is the usual adjoint operator. In the case where $X$ and $Y$ are compact spaces, $f\adj:C(Y)\to C(X)$ is a continuous, unital homomorphism with $\norm{f\adj}=1$. Moreover, if $f$ is surjective then $f\adj$ is isometric.

Let $X$ be a compact space and let $\mu$ be a measure on $X$. We write $\supp\mu$ for the {\em support} of $\mu$; that is, $\supp\mu$ is the complement of the largest open subset $U$ of $X$ such that $\abs{\mu}(U)=0$. Let $Y$ be a second compact space. Suppose that ${\Pi:X\to Y}$ is a continuous surjection. Then the {\em pushforward measure} $\pushforward\Pi\mu$, defined by $\pushforward\Pi\mu(E)=\mu(\Pi\inv(E))$ for each Borel set $E\subseteq Y$, is a regular, complex Borel measure on $Y$. It is useful to note that ${\Pi\dadj:C(X)\dual\to C(Y)\dual}$ (where $\Pi\dadj = (\Pi\adj)\adj$) is the map which sends a measure $\mu$ on $Y$ to the pushforward measure $\pushforward\Pi\mu$ on $X$. (This fact is noted in \cite[Example~VI.1.7]{conway1990a}.) In particular, if $f:Y\to\C$ is a Borel measurable function, then $f\circ\Pi$ is Borel measurable and $f$ is $\pushforward\Pi\mu$-integrable if and only if $f\circ\Pi$ is $\mu$-integrable, and if this occurs then
\begin{equation}\label{pushforward integral equality}
\int_X f \dm\pushforward\Pi\mu = \int_Y f\circ \Pi\dm\mu.
\end{equation}
(See, for example, \cite[Theorem~2.4.18]{federer1969}.)

Let $E\subseteq\C$. We write $\cconv{E}$ for the {\em closed convex hull of $E$}; that is, $\cconv{E}$ is the smallest closed, convex subset of $\C$ which contains $E$.

\smallskip

We conclude this section with an elementary lemma; we include a short proof for the convenience of the reader.

\begin{lemma}\label{fibre neighbourhood lemma}
Let $X$ and $Y$ be compact spaces and let $K$ be a closed subset of $X$. Suppose that $\Pi:Y\to X$ is a continuous surjection and $V$ is an open neighbourhood of $\Pi\inv(K)$ in $Y$. Then there exists an open neighbourhood $U$ of $K$ in $X$ such that $\Pi\inv(U)\subseteq V$.
\end{lemma}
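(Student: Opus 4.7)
The plan is to use the fact that a continuous map from a compact space to a Hausdorff space is a closed map, and then take complements.

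First, I would set $F := Y \setminus V$. Since $V$ is open in $Y$ and $Y$ is compact, $F$ is a closed subset of a compact Hausdorff space, hence itself compact. Because $\Pi$ is continuous, $\Pi(F)$ is a compact (hence closed) subset of $X$.

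Next, I would verify that $\Pi(F) \cap K = \emptyset$. Indeed, $F \cap \Pi^{-1}(K) = (Y \setminus V) \cap \Pi^{-1}(K) = \emptyset$ since $V \supseteq \Pi^{-1}(K)$ by hypothesis; hence no point of $F$ maps into $K$, which is exactly the claim $\Pi(F) \cap K = \emptyset$.

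Define $U := X \setminus \Pi(F)$. Then $U$ is open in $X$ and contains $K$ by the previous paragraph. It remains to check $\Pi\inv(U) \subseteq V$: if $y \in \Pi\inv(U)$, then $\Pi(y) \notin \Pi(F)$, so in particular $y \notin F$ (otherwise $\Pi(y) \in \Pi(F)$), which gives $y \in V$. Since there are no nontrivial obstacles here—the argument is purely point-set topology and relies only on the standard fact that continuous images of compact sets in a Hausdorff space are closed—I do not anticipate any step being genuinely difficult; the only thing to be careful about is that one must take $\Pi(F)$ rather than $F$ itself when forming $U$, so that $U$ actually lives downstairs in $X$.
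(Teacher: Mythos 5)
Your proof is correct, and it takes a genuinely different route from the paper. The paper argues by compactness via neighbourhood filters: it considers the family $(N_\alpha)$ of all closed neighbourhoods of $K$, notes $\bigcap_\alpha N_\alpha = K$, covers the compact set $Y\setminus V$ by the open sets $Y\setminus\Pi\inv(N_\alpha)$, extracts a finite subcover, and takes $U$ inside the corresponding finite intersection $N := \bigcap_{j=1}^n N_{\alpha_j}$. You instead use that $\Pi$ is a closed map (continuous image of the compact set $F = Y\setminus V$ is compact, hence closed in the Hausdorff space $X$) and simply take $U := X\setminus\Pi(F)$; the verifications that $K\subseteq U$ and $\Pi\inv(U)\subseteq V$ are immediate set-theoretic checks, all of which you carry out correctly. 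Your argument is shorter and more transparent, explicitly produces $U$ in one step rather than through a finite-subcover selection, and incidentally never uses surjectivity of $\Pi$ (the paper's proof does not really need it either, but yours makes this plain). The paper's approach buys nothing extra here, and indeed its write-up contains a slip---it refers to $Y\setminus U$ before $U$ has been defined, where $Y\setminus V$ is clearly intended---which your cleaner argument avoids entirely.
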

\begin{proof}
Let $(N_\alpha)$ be the family of all closed neighbourhoods of $K$ in $X$. For each $\alpha$, let $V_\alpha :=Y\setminus \Pi\inv(N_\alpha)$. We have that $\bigcap_{\alpha} N_\alpha = K$, and so $Y\setminus U\subseteq \bigcup_{\alpha}V_\alpha$. Since $Y\setminus U$ is compact, there exists $\alpha_1,\dots,\alpha_n$ such that $Y\setminus U\subseteq\bigcup_{j=1}^n V_{\alpha_j}$. Set $N := \bigcap_{j=1}^n N_{\alpha_j}$ and let $U$ be an open set in $X$ with $K\subseteq U \subseteq N$. Then $U$ has the desired properties.
\end{proof}

\subsection{Averaging operators}
We first examine the properties of certain linear operators between algebras of continuous functions. Before we give any formal definitions, we first fix some notation. Throughout this section let $X$ and $Y$ be compact spaces, and let ${T:C(Y)\to C(X)}$ be a bounded linear operator. If $\lambda\in {C(X)}\dual$ then there exists a unique measure $\mu_\lambda$ on $Y$ such that
\begin{equation}\label{averaging measure}
\lambda(T(f))=\int_Y f(y)\dm\mu_\lambda(y)
\end{equation}
for all $f\in C(Y)$. If $\lambda$ is a character on $C(X)$, then $\lambda$ is a point evaluation at some point $x\in X$, and we instead write $\mu_x$ in place of $\mu_\lambda$ for the measure associated with this character.

Throughout the remainder of this paper, the measures $\mu_\lambda$ will always denote the measure associated (via \eqref{averaging measure}) with the linear operator labelled $T$, whenever this operator is defined.

We say that $T$ is {\em positive} if, for all $f\in C(Y)$ such that $f(Y)\subseteq[0,\infty)$, we have $T(f)(X)\subseteq[0,\infty)$. In this case, the measure $\mu_x$, as above, is a positive measure for all $x\in X$. It is standard that if $T$ is unital and $\norm{T}=1$ then $T$ is positive (see, for example, \cite[p.~17]{blecher2004b}).

We require the following definition from \cite{kelley1958}.

\begin{definition}
Suppose that $Y=X$. We say that $T$ is an {\em averaging operator} if, for each $x\in X$, we have
\[
\supp\mu_{x}\subseteq \{y\in X:T(f)(y)=T(f)(x)\quad (f\in C(Y))\}.
\]
\end{definition}

Kelley \cite[Theorem~2.2]{kelley1958} showed that, in the case where $Y=X$, a bounded linear operator $T$ is averaging if and only if ${T(fT(g))=T(f)T(g)}$ for all $f,g\in C(X)$. It is this property that is most useful to us. Note that Kelley's proof was for the case where $C(X)$ is the algebra of continuous, real-valued functions on $X$. However, it is a simple task to see that the proof works for complex-valued functions too (indeed Kelley notes this in the paper).

We now expand on this result. Several of the statements contained within the following lemma are well-known, but we include details for the convenience of the reader.

\begin{lemma}\label{averaging equivalences}
Suppose that $\Pi:Y\to X$ is a continuous surjection with $T\circ\Pi\adj=\id_{C(X)}$. The following statements are equivalent:
\begin{enumerate}
  \item\label{averaging equivalences a} the projection $\Pi\adj\circ T:C(Y)\to C(Y)$ is averaging;
  \item\label{averaging equivalences b} for each $f\in C(Y)$ and $g\in C(X)$ we have $T(f\Pi\adj(g))=T(f)g$;
  \item\label{averaging equivalences c} for each $x\in X$ we have $\supp\mu_x\subseteq \Pi\inv(\{x\})$.
\end{enumerate}
Suppose further that $T$ is unital and that $\norm{T}=1$. Then $T$ is positive and satisfies each of the above conditions. In this case, $T$ also satisfies the following
\begin{enumerate}\setcounter{enumi}{3}
    \item\label{averaging equivalences d} for each $f\in C(Y)$ and $x\in X$ we have $T(f)(x)\in\cconv{f(\Pi\inv(\{x\}))}$.
\end{enumerate}
\end{lemma}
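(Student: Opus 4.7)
The plan is to establish the equivalences (1) $\Leftrightarrow$ (2) $\Leftrightarrow$ (3) in a short chain, and then, under the extra hypothesis that $T$ is unital with $\norm{T}=1$, to deduce positivity, verify (3) directly, and extract (4). Throughout I will use that $\Pi\adj:C(X)\to C(Y)$ is isometric (since $\Pi$ is surjective), hence injective.

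\textbf{(1) $\Leftrightarrow$ (2).} I would invoke Kelley's characterisation quoted just before the lemma: $P:=\Pi\adj\circ T$ is averaging iff $P(f\,P(h))=P(f)\,P(h)$ for all $f,h\in C(Y)$. Since $P(f)=\Pi\adj(T(f))$ and $\Pi\adj$ is injective, this reduces to the identity $T(f\,\Pi\adj(T(h)))=T(f)\,T(h)$ in $C(X)$. Specialising (2) to $g:=T(h)$ gives this identity, so (2)$\Rightarrow$(1). Conversely, given $g\in C(X)$, set $h:=\Pi\adj(g)$; the hypothesis $T\circ\Pi\adj=\id$ yields $T(h)=g$, and the Kelley identity becomes (2).

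\textbf{(2) $\Leftrightarrow$ (3).} Evaluating (2) at $x\in X$ and using \eqref{averaging measure} rewrites condition (2) as $\int_Y f(y)\,(g(\Pi(y))-g(x))\dm\mu_x(y)=0$ for all $f\in C(Y)$ and $g\in C(X)$. By uniqueness in the Riesz representation theorem, this is equivalent to the (complex) measure $(g\circ\Pi-g(x))\dm\mu_x$ vanishing for every $g\in C(X)$, i.e.\ $\abs{\mu_x}(\{y\in Y:g(\Pi(y))\neq g(x)\})=0$. If (3) fails, pick $y_0\in\supp\mu_x$ with $\Pi(y_0)\neq x$ and, using that $C(X)$ separates points, choose $g\in C(X)$ with $g(\Pi(y_0))\neq g(x)$; then $\{y:g(\Pi(y))\neq g(x)\}$ is an open neighbourhood of $y_0$ of strictly positive $\abs{\mu_x}$-measure, contradicting (2). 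Conversely, if $\supp\mu_x\subseteq\Pi\inv(\{x\})$, then $g\circ\Pi$ is constantly $g(x)$ on $\supp\mu_x$, and (2) follows by pulling this constant out of the integral.

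\textbf{The unital norm-one case and (4).} I would cite the standard fact (noted just before the statement) that a unital operator of norm one is positive, so every $\mu_x$ is a positive measure; applying $T\circ\Pi\adj=\id$ to $1$ gives $\mu_x(Y)=1$, so each $\mu_x$ is a probability measure. For a direct verification of (3), suppose $y_0\in\supp\mu_x$ with $\Pi(y_0)\neq x$; Urysohn supplies $g\in C(X)$ with $0\le g\le 1$, $g(x)=0$ and $g(\Pi(y_0))=1$, and then $h:=g\circ\Pi\geq 0$ satisfies $\int_Y h\dm\mu_x=T\Pi\adj(g)(x)=g(x)=0$, contradicting $\mu_x(\{h>1/2\})>0$. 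For (4), I would argue by Hahn--Banach on $\C$: if $T(f)(x)=\int f\dm\mu_x$ lay outside the closed convex set $\cconv{f(\Pi\inv(\{x\}))}$, some real-affine functional $\ell$ on $\C$ would separate them, yet integrating $\ell\circ f\leq c$ over the probability measure $\mu_x$ (supported in $\Pi\inv(\{x\})$) would give $\ell(T(f)(x))\leq c$, a contradiction.

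The main obstacle is the implication (2)$\Rightarrow$(3) for a potentially complex-valued $\mu_x$, where I must pass from the integral identity to a statement about open $\abs{\mu_x}$-null neighbourhoods; the other implications are either routine reformulations of Kelley's theorem or direct consequences of positivity and Urysohn.
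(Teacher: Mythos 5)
Your proof is correct, and it takes a genuinely different route from the paper's at two points. For the three-way equivalence, you close the chain with a direct measure-theoretic proof of (2) $\Leftrightarrow$ (3): Riesz uniqueness applied to the regular measure $(g\circ\Pi-g(x))\dm\mu_x$, together with the fact that any open set meeting $\supp\mu_x$ has positive $\abs{\mu_x}$-measure. The paper instead proves (1) $\Leftrightarrow$ (3) straight from Kelley's definition of averaging, identifying the representing measures $\nu_y$ of $P=\Pi\adj\circ T$ at points $y\in\Pi\inv(\{x\})$ with $\mu_x$ and showing that the set $\{z\in Y: P(f)(z)=P(f)(y)\ \text{for all}\ f\in C(Y)\}$ is exactly $\Pi\inv(\{x\})$; both arguments are sound and of comparable length (your (1) $\Leftrightarrow$ (2) via injectivity of $\Pi\adj$ matches the paper's). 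The more significant divergence is in the unital norm-one case: the paper invokes Tomiyama's theorem to obtain the conditional-expectation identity $P(P(f)gP(h))=P(f)P(g)P(h)$ for the unital norm-one projection $P$, and then concludes that $P$ is averaging via Kelley's characterisation; you bypass Tomiyama entirely by verifying condition (3) directly from positivity with a Urysohn function ($0\le g\le 1$, $g(x)=0$, $g(\Pi(y_0))=1$, so $\int_Y g\circ\Pi\dm\mu_x = g(x) = 0$ forces $\mu_x(\{g\circ\Pi>1/2\})=0$, contradicting $y_0\in\supp\mu_x$). This is more elementary and self-contained, at the cost of leaning on your own (2) $\Leftrightarrow$ (3) argument rather than the Tomiyama--Kelley machinery, which the paper reuses elsewhere. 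Finally, your Hahn--Banach separation argument for (4) supplies a proof of the barycentre fact that the paper simply asserts from $T(f)(x)=\int_Y f\dm\mu_x$; that too is fine.
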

\begin{proof} Throughout, set $P:=\Pi\adj\circ T$. Then $P$ is a continuous projection on $C(Y)$ whose range is precisely the set $\Pi\adj(C(X))$.
\item \ref{averaging equivalences a} $\implies$ \ref{averaging equivalences b}. Suppose that $P$ is averaging. Then, by \cite[Theorem~2.2]{kelley1958}, $P$ satisfies ${P(fP(h))=P(f)P(h)}$ for all ${f,h\in C(Y)}$. Fix ${f\in C(Y)}$ and ${g\in C(X)}$; set ${h:=\Pi\adj(g)}$. The map $\Pi\adj$ is an isometric homomorphism from $C(X)$ into $C(Y)$, so we have
    \[
    P(fP(h))=P(f)P(h)=\Pi\adj(T(f))\Pi\adj(T(h))= \Pi\adj(T(f)T(h)),
    \]
    so that $\Pi\adj(T(fT(h)))=\Pi\adj(T(f)T(h))$. Thus $T(f\Pi\adj(T(h)))=T(f)T(h)$. But then $T(h)=T(\Pi\adj(g))=g$ so that $T(fh)=T(f)g$. This proves \ref{averaging equivalences b}.
\item \ref{averaging equivalences b} $\implies$ \ref{averaging equivalences a}. Suppose that $T(f\Pi\adj(g))=T(f)g$ for all $f\in C(Y)$ and $g\in C(X)$. We have
    \[
    P(fP(\Pi\adj(g)))=P(f\Pi\adj(g))=\Pi\adj(T(f\Pi\adj(g)))= \Pi\adj(T(f)g)=P(f)\Pi\adj(g),
    \]
    for all $f\in C(Y)$ and $g\in C(X)$. However, the range of $P$ is precisely the subalgebra $\Pi\adj(C(X))$ of $C(Y)$, and so for each $h\in C(Y)$ there exists a unique $g\in C(X)$ such that $P(h)=\Pi\adj(g)$. Thus, by the above, ${P(fP(h))=P(f)P(h)}$ for all ${f, h\in C(Y)}$. By \cite[Theorem~2.2]{kelley1958}, $P$ is averaging. This proves \ref{averaging equivalences a}.
\item \ref{averaging equivalences a} $\iff$ \ref{averaging equivalences c}. For each $y\in Y$, let $\nu_y$ be the unique, regular Borel measure on $Y$ such that, for all $f\in C(Y)$, we have
    \[
    P(f)(y)=\int_Y f\dm\nu_y.
    \]
    Fix $x\in X$ and let $y\in Y$ such that $\Pi(y)=x$. Let
    \[
    E:=\{z\in Y:P(f)(z)=P(f)(y)\quad (f\in C(Y))\}.
    \]
    Then we have
    \[
    \int_Y f\dm\nu_y=(\Pi\adj\circ T)(f)(y)=T(f)(\Pi(y))=\int_Y f\dm\mu_{x}
    \]
    for all $f\in C(Y)$, so that $\nu_y=\mu_{x}$.
    If $z\in \Pi\inv(\{x\})$ then we have $\Pi(z)=x$ and so
    \[
    P(f)(z)=T(f)(\Pi(z))=T(f)(x)=P(f)(y).
    \]
    Thus $\Pi\inv(\{x\})\subseteq E$. Conversely, if $z\in E$ then we have
    \[
    T(f)(\Pi(z))=P(f)(z)=P(f)(y)=T(f)(x),
    \]
    for all $f\in C(Y)$. Thus we must have $\Pi(z)=x$; otherwise, choose a continuous function $g\in C(X)$ such that $g(\Pi(z))=0$ and $g(x)=1$ and look at $f=\Pi\adj(g)$. Thus we have $E\subseteq \Pi\inv(\{x\})$ and equality holds.

    If $P$ is averaging then we have
    \[
    \supp\mu_x = \supp\nu_y \subseteq E =\Pi\inv(\{\Pi(y)\})=\Pi\inv(\{x\}).
    \]
    Conversely, if $\supp\mu_x \subseteq \Pi\inv(\{x\})=E$ then we have
    \[
    \supp\nu_y \subseteq E
    \]
    so that $P$ is averaging. This proves that \ref{averaging equivalences a} holds if and only if \ref{averaging equivalences c} holds.

\smallskip
For the remainder of the proof assume, in addition, that $T$ is unital and $\norm{T}=1$. It follows that $T$ is positive. Then, for each $x\in X$ the measure $\mu_x$ is positive with $\mu_x(Y)=1$.

    We must show that $P$ is averaging under these additional assumptions. Since $T$ is unital and $\norm{T}=1$, it follows that $P$ is a unital projection on $C(Y)$ such that $\norm{P}=1$. Then, by a well-known theorem of Tomiyama \cite{tomiyama1957} (see also \cite[Corollary~4.2.9]{blecher2004b}), $P$ satisfies $P(P(f)gP(h))=P(f)P(g)P(h)$ for all $f,g,h\in C(Y)$. Thus $P$ is averaging by \cite[Theorem~2.2]{kelley1958}.

It remains to show that the above conditions imply that $T$ satisfies \ref{averaging equivalences d}.

\item \ref{averaging equivalences c} $\implies$ \ref{averaging equivalences d}. Suppose that $\supp\mu_x\subseteq\Pi\inv(\{x\})$ for all $x\in X$ and $f\in C(Y)$. Then, since $\mu_x$ is a probability measure, we have
    \[
    T(f)(x)=\int_Y f(y)\dm\mu_x(y)\in\cconv{f(\Pi\inv(\{x\}))}
    \]
    for all $x\in X$ and $f\in C(Y)$.

This completes the proof.
\end{proof}

The extra assumptions that $T$ is unital and satisfies $\norm{T}=1$ are not a significant obstruction to the topics discussed in this paper.

The map $x\mapsto \mu_x:X\to C(Y)\dual$ is weak-$*$ continuous as noted in \cite{lloyd1963,kelley1958}, and any weak-$*$ continuous function $\psi$ from $X$ into $C(Y)\dual$ yields an bounded linear map from $C(Y)$ into $C(X)$.

\begin{remark}
Property \ref{averaging equivalences b} in the above lemma is reminiscent of the defining property of ``conditional expectations'', which are important in $C^\ast$-algebra and von Neumann algebra theory.
\end{remark}

\section{Uniform algebra extensions}
\label{sec:uniform algebra extensions}
We begin by examining the general class of uniform algebra extensions of a given uniform algebra. Many results proved in special cases can be generalised to the general setting. Throughout, we shall use the following definition of a uniform algebra extension.

\begin{definition}
Let $X, Y$ be compact spaces, let $A$ be a uniform algebra on $X$, and let $B$ be a uniform algebra on $Y$. Suppose that there exists a continuous surjection $\Pi:Y\to X$ such that $\Pi\adj(A)\subseteq B$. Then we say that $B$ is a {\em uniform algebra extension} of $A$, and $\Pi$ is the map {\em associated} with this extension.
\end{definition}

Note that, with $X$, $Y$, $A$, and $B$ as in the above, there may be many continuous surjections $\Pi:Y\to X$ for which $B$ is a uniform algebra extension of $A$ (in the sense defined above).

Before we continue, we make some remarks about our choice of terminology. Let $A$, $B$, $X$, and $Y$ be as in the definition. If $\theta:A\to B$ is an isometric homomorphism then it is not hard to see that $\shilov A$ is always contained in the image of $\shilov B$ under the associated map $\tau_\theta:\charspace B\to\charspace A$. In particular, the image of $Y$ under $\tau_\theta$ is a compact subset of $\charspace A$ which contains $\shilov A$. Hence $A$ can be identified with a uniform algebra on $\tau_\theta(Y)$. By changing our $X$ to coincide with $\tau_\theta(Y)$, we can obtain a continuous surjection from $Y$ onto $X$.

There are many examples of uniform algebra extensions in the literature. For example, given any uniform algebra $A$ on a compact space $X$, the uniform algebra $C(X)$ is a uniform algebra extension of $A$, in the sense above, where the associated map is taken to be the identity map on $X$. In particular, if $A$ is a non-trivial uniform algebra and $B$ is any uniform algebra extension of $A$, then $B$ need not be non-trivial.

The following illustrative examples provide a basis for the present study.

\begin{example}\label{eg identify prefect set}
Let $K$ be a perfect subset of $Y$. Suppose that $X$ is obtained from $Y$ by contracting $K$ to a point, say $x_0\in X$, and that $\Pi$ is the quotient map. Let $A_0$ be a non-trivial uniform algebra on $K$. Let ${A=C(X)}$ and 
\[
{B=\{f\in C(Y):f|K\in A_0\}}.
\]
Clearly $B$ is a uniform algebra extension of $A$ and, since ${B_{x_0}=A_0\neq C(E_x)}$, we have $B\neq C(Y)$. Moreover, since every function in $\Pi\adj(C(X))$ is constant on $K$, it follows quickly that $B$ is natural on $Y$ if and only if $A_0$ is natural on $K$. 
\end{example}

Feinstein and Izzo \cite{feinstein2015} have recently used similar ideas to construct examples of essential uniform algebras. Note that, in the above, $A$ is natural on $X$, but $B$ need not be natural on $Y$.

\begin{example}
  Let $X$ and $Y$ be the closed unit disk in $\C$, let $\Pi$ be the identity map on $X$, let $A$ be the disk algebra, and let
  \[
    B = \{f\in C(Y):f|\T \in A|\T\},
  \]
  where $\T$ denotes the unit circle in $\C$. Then $B$ is not natural on $Y$; $\charspace B$ consists of two copies of the closed unit disk that are identified on $\T$. Note that, in this case, $\shilov B\not\subseteq \Pi\inv(\shilov A)$, and that $A$ admits non-trivial Jensen measures supported on $\T$, which are also Jensen measures for $B$.
\end{example}

Throughout the remainder of this section, let $X$ and $Y$ be compact spaces, let $A$ be a uniform algebra on $X$, let $B$ be a uniform algebra on $X$. We also assume that $B$ is a uniform algebra extension of $A$ with associated map $\Pi$. For each $x\in X$, we write $E_x$ for the fibre of $\Pi$ over $x$ (that is, $E_x:=\Pi\inv(\{x\})$), we write $B_x$ for $B_{E_x}$.

We have already seen that there are non-natural uniform algebra extensions of a natural uniform algebra. However, if $A$ is natural then $\widehat B$ is a uniform algebra extension of $A$ with associated map $\widehat\Pi:\charspace B\to X$ given by $\widehat\Pi(\varphi) = \varphi\circ\Pi\adj$ $(\varphi\in\charspace B)$. (Note that it is essential that $A$ is natural on $X$ to ensure that $\widehat\Pi$ is a surjection onto $X$.) 

On the other hand, if $B$ is natural on $Y$ then $B_x$ is natural on $E_x$ for each $x\in X$. To see this, for $x\in X$, note that the character space of $B_x$ is identified with the set of $y\in Y$ such that $\abs{f(y)}\leq \abs{f}_{E_x}$ for each $f\in B$. But then if $y\in Y\setminus E_x$, then there is a function $g\in\Pi\adj(A)$ such that $g(y) = 1$ and $g(E_x)\subseteq \{0\}$, so that $y$ does not belong to $\charspace{B_x}$. Thus $\charspace{B_x}\subseteq E_x$ and so equality holds.

Our first theorem provides stronger assumptions that ensure that $B$ is a natural uniform algebra. We first prove the following lemma, which is surely well-known. (A similar result is given in \cite[Lemma~8.1]{gamelin1978}. The result is also used in the proof of \cite[Lemma~9]{basener1971}.)

\begin{lemma}\label{Jensen pushforward}
Let $\varphi\in\charspace B$ and let $\mu$ be a Jensen measure for $\varphi$ with respect to $B$. Then the pushforward measure $\pushforward\Pi\mu$ is a Jensen measure for $\varphi\circ\Pi\adj$ with respect to $A$ and $\supp\mu\subseteq\Pi\inv(\supp\pushforward\Pi\mu)$.
\end{lemma}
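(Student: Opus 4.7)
The plan is to handle the two conclusions separately, both via essentially routine verifications using the pushforward integral identity \eqref{pushforward integral equality}.

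First, for the Jensen-measure claim, I would verify that $\pushforward\Pi\mu$ is a probability measure: since $\mu$ is positive, $\pushforward\Pi\mu$ is positive, and $\pushforward\Pi\mu(X) = \mu(\Pi\inv(X)) = \mu(Y) = 1$. Next, fix $f\in A$ and set $g := \Pi\adj(f) = f\circ\Pi \in B$. Then $(\varphi\circ\Pi\adj)(f) = \varphi(g)$, so the Jensen inequality for $\mu$ with respect to $B$ gives
\[
\log\abs{(\varphi\circ\Pi\adj)(f)} = \log\abs{\varphi(g)} \leq \int_Y \log\abs{g}\dm\mu = \int_Y \log\abs{f\circ\Pi}\dm\mu.
\]
Applying \eqref{pushforward integral equality} to the Borel function $\log\abs{f}$ on $X$ (with the usual convention $\log 0 = -\infty$, which causes no trouble because the integrand is bounded above and may be split into positive and negative parts) turns the right-hand side into $\int_X \log\abs{f}\dm\pushforward\Pi\mu$. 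This establishes the Jensen inequality for $\pushforward\Pi\mu$ with respect to $A$ at the character $\varphi\circ\Pi\adj$.

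For the support inclusion, I would argue by contrapositive. Suppose $y\in Y$ with $\Pi(y)\notin\supp\pushforward\Pi\mu$. By definition of support there exists an open neighbourhood $U$ of $\Pi(y)$ in $X$ with $\pushforward\Pi\mu(U)=0$. Then $V := \Pi\inv(U)$ is an open neighbourhood of $y$ in $Y$, and positivity of $\mu$ gives
\[
\mu(V) = \mu(\Pi\inv(U)) = \pushforward\Pi\mu(U) = 0.
\]
Hence $y\notin\supp\mu$. This proves $\supp\mu\subseteq\Pi\inv(\supp\pushforward\Pi\mu)$.

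Neither step presents a genuine obstacle; the only minor care needed is to justify applying \eqref{pushforward integral equality} to the extended-real function $\log\abs{f}$, which is handled by the standard positive/negative-part decomposition together with the fact that $\abs{f}$ is bounded. Everything else is a direct application of the material already recorded in the preliminaries.
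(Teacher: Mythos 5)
Your proof is correct and follows essentially the same route as the paper: the Jensen inequality for $\pushforward\Pi\mu$ is obtained by pulling back $f\in A$ to $g=\Pi\adj(f)\in B$ and applying \eqref{pushforward integral equality}, exactly as in the paper's argument. The only difference is that the paper leaves the support inclusion as ``not hard to see'' and glosses over the extended-value issue with $\log\abs{f}$, both of which you fill in correctly (the contrapositive argument via an open $U$ with $\pushforward\Pi\mu(U)=0$ is the intended one).
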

\begin{proof}
Set $\psi:=\varphi\circ\Pi\adj$. It is easy to see that $\psi\in\charspace A$.
Fix $f\in A$, and let $g=\Pi\adj(f)$. Then, by \eqref{pushforward integral equality}, we have
\[
\log{\abs{\psi(f)}}=\log{\abs{\varphi(g)}}\leq \int_Y \log{\abs{g(y)}}\dm\mu(y)=\int_X \log{\abs{f(x)}}\dm\pushforward\Pi\mu,
\]
and so $\pushforward\Pi\mu$ is a Jensen measure for $\psi$. It is not hard to see that, in this case, we have ${\supp{\mu}\subseteq\Pi\inv(\supp{\pushforward\Pi\mu})}$. This completes the proof.
\end{proof}

We now prove our first theorem.

\begin{theorem}\label{Jensen measure triviality thm}
Suppose that $A$ does not admit any non-trivial Jensen measures and that, for each $x\in X$, $B_x$ does not admit any non-trivial Jensen measures. Then $B$ does not admit any non-trivial Jensen measures. In particular, $B$ is natural on $Y$.
\end{theorem}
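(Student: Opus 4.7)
Fix a character $\varphi\in\charspace B$ and any Jensen measure $\mu$ for $\varphi$; my plan is to show that $\mu$ must be a point mass, which implies both that $\mu$ is trivial and, since every character on $B$ admits a Jensen measure, that $B$ is natural on $Y$.

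The argument will proceed in two reductions. First, I would localise $\mu$ to a single fibre of $\Pi$ by applying Lemma~\ref{Jensen pushforward}: $\pushforward\Pi\mu$ is a Jensen measure on $A$ for the character $\varphi\circ\Pi\adj$, which by hypothesis must be the Dirac mass $\delta_{x_0}$ at some $x_0\in X$. The support inclusion in the same lemma then gives $\supp\mu\subseteq\Pi\inv(\{x_0\})=E_{x_0}$, so $\mu$ is in fact a probability measure concentrated on the single fibre $E_{x_0}$.

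Next, I would descend $\varphi$ to a character on $B_{x_0}$ and upgrade $\mu$ to a Jensen measure there. Since $\mu(E_{x_0})=1$ and $\log\abs{f}\le\log\abs{f}_{E_{x_0}}$ on $E_{x_0}$, Jensen's inequality for $\varphi$ yields
\[
\log\abs{\varphi(f)}\le\int_{E_{x_0}}\log\abs{f}\dm\mu\le\log\abs{f}_{E_{x_0}}\qquad(f\in B).
\]
In particular, $f|E_{x_0}=0$ forces $\varphi(f)=0$, so $\varphi$ factors through the restriction $B\to B|E_{x_0}$ as a multiplicative linear functional that is contractive with respect to the sup norm on $E_{x_0}$; this extends uniquely to a character $\hat\varphi$ on the uniform closure $B_{x_0}$. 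To verify that $\mu$ is a Jensen measure for $\hat\varphi$, I would fix $g\in B_{x_0}$ and pick $f_k\in B$ with $\epsilon_k:=\abs{f_k-g}_{E_{x_0}}\to 0$. Then $\abs{f_k}\le\abs{g}+\epsilon_k$ on $E_{x_0}$, so
\[
\log\abs{\varphi(f_k)}\le\int_{E_{x_0}}\log\bigl(\abs{g}+\epsilon_k\bigr)\dm\mu,
\]
and passing to the limit using monotone convergence on the right, applied to the uniformly-bounded-above sequence $\log(\abs{g}+\epsilon_k)\searrow\log\abs{g}$, gives $\log\abs{\hat\varphi(g)}\le\int_{E_{x_0}}\log\abs{g}\dm\mu$.

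Finally, the hypothesis on $B_{x_0}$ forces $\mu=\delta_y$ for some $y\in E_{x_0}$, and since Jensen measures represent their characters, $\varphi=\pe y$ as required. The main obstacle I anticipate is the transfer of the Jensen inequality from $B|E_{x_0}$ to its uniform closure $B_{x_0}$: since $\log$ is only upper semicontinuous, one cannot pass to the limit under the integral directly, and the workaround of majorising $\abs{f_k}$ by $\abs{g}+\epsilon_k$ and invoking monotone convergence is what makes the argument go through.
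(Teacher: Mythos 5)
Your proposal is correct and follows the paper's proof exactly: push $\mu$ forward via Lemma~\ref{Jensen pushforward} to get a trivial Jensen measure on $X$, localise $\supp\mu$ to a fibre $E_{x_0}$, descend $\varphi$ to a character on $B_{x_0}$ with $\mu$ as a Jensen measure, and invoke the hypothesis on $B_{x_0}$. The only difference is that you supply in full the step the paper dismisses as ``not hard to see''---the transfer of the Jensen inequality from $B|E_{x_0}$ to its uniform closure via the majorisation $\abs{f_k}\le\abs{g}+\epsilon_k$ and monotone convergence---which is a correct and welcome elaboration.
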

\begin{proof}
Fix a character $\varphi$ on $B$, and let $\mu$ be a Jensen measure on $Y$ for $\varphi$ with respect to $B$. By Lemma~\ref{Jensen pushforward}, $\pushforward\Pi\mu$ is a Jensen measure on $X$ for $\varphi\circ\Pi\adj$, and we have $\supp\mu\subseteq\Pi\inv(\supp\pushforward\Pi\mu)$. Since $\pushforward\Pi\mu$ is trivial, there exists $x\in X$ such that $\pushforward\Pi\mu$ is the point mass measure at $x$. It follows that $\supp\mu\subseteq E_x$.

It is not hard to see that there exists $\psi\in\charspace{B_x}$ such that ${\psi(f|E_x)=\varphi(f)}$ for all $f\in B$, and that $\mu_{E_x}$ is a Jensen measure of $\psi$. Since $B_x$ does not admit any non-trivial Jensen measures, it follows that $\mu$ is trivial. This completes the proof.
\end{proof}

In many examples utilising uniform algebra extensions (explicity or implicitly), the base algebra $A$ is chosen to be regular (normal) on $X$, and are subject to the condition that $B_x = C(E_x)$ for each $x\in X$; the above theorem shows that $B$ is natural on $Y$ in this case.

Next we seek to understand the relationship between $\shilov A$ and $\shilov B$. Since $\Pi\adj$ is an isometry, it is easy to see that $\shilov A\subseteq \Pi(\shilov B)$. We start with the following proposition.

\begin{proposition}\label{peak set permanence}
  Let $E\subseteq X$ and let $F = \Pi\inv(E)$.
  \begin{enumerate}
    \item\label{peak set permanence a} If $E$ is a peak set for $A$, then $F$ is a peak set for $B$.
    \item\label{peak set permanence b} If $E$ is a peak set in the weak set for $A$, then $F$ is a peak set in the weak sense for $B$.
     \item\label{peak set permanence c} If $x\in \choquet A$ and $y\in \choquet{B_x}$ then $y\in \choquet B$.
  \end{enumerate}
\end{proposition}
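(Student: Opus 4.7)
For part \ref{peak set permanence a}, the plan is simply to pull back a peaking function. If $f\in A$ peaks on $E$, then $g:=\Pi\adj(f)\in B$ satisfies $g(y)=f(\Pi(y))=1$ for $y\in F=\Pi\inv(E)$, and $\abs{g(y)}=\abs{f(\Pi(y))}<1$ for $y\in Y\setminus F$, since then $\Pi(y)\notin E$. Hence $g$ peaks on $F$.

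For part \ref{peak set permanence b}, I would simply use that preimages commute with intersections. If $E=\bigcap_{\alpha}E_\alpha$ for peak sets $E_\alpha$ of $A$, then $F=\Pi\inv(E)=\bigcap_\alpha \Pi\inv(E_\alpha)$, and each $\Pi\inv(E_\alpha)$ is a peak set for $B$ by part \ref{peak set permanence a}.

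Part \ref{peak set permanence c} is the substantive one, and I would attack it through the annihilating-measure characterisation of peak sets in the weak sense already recorded after the definition (i.e., the Browder reference). Let $\mu$ be an annihilating measure for $B$; I must show $\mu_{\{y\}}$ is annihilating for $B$. Since $x\in\choquet A$, the singleton $\{x\}$ is a peak set in the weak sense for $A$, and by part \ref{peak set permanence b} applied to $\{x\}$, the fibre $E_x=\Pi\inv(\{x\})$ is a peak set in the weak sense for $B$. The characterisation then gives that $\mu_{E_x}$ annihilates $B$. Viewing $\mu_{E_x}$ as a measure $\tilde\nu$ on $E_x$ (since its support lies in $E_x$, which is closed in $Y$), for any $g\in B|E_x$, writing $g=f|E_x$ with $f\in B$, we have $\int_{E_x}g\dm\tilde\nu=\int_Y f\dm \mu_{E_x}=0$, so $\tilde\nu$ annihilates $B|E_x$, and therefore annihilates its uniform closure $B_x$.

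Now since $y\in\choquet{B_x}$, $\{y\}$ is a peak set in the weak sense for $B_x$, so $\tilde\nu_{\{y\}}$ is annihilating for $B_x$. But $\tilde\nu_{\{y\}}=\mu(\{y\})\delta_y$, and testing this against the constant function $1\in B_x$ yields $\mu(\{y\})=0$. Hence $\mu_{\{y\}}=0$ as a measure on $Y$, which trivially annihilates $B$. Applying the characterisation in the other direction, $\{y\}$ is a peak set in the weak sense for $B$, i.e. $y\in\choquet B$. The main obstacle here is just bookkeeping the two-step transfer --- from $B$ to the fibre algebra $B_x$ and back --- and being careful that the restriction of $\mu_{E_x}$ to $E_x$ really does annihilate the uniform closure $B_x$ (not merely $B|E_x$), which follows by continuity of integration against a fixed measure.
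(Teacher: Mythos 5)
Your proofs of parts 1 and 2 are exactly the paper's: pull the peaking function back through $\Pi\adj$, and use that preimages commute with intersections. For part 3 the two diverge only in packaging. The paper disposes of it in one line by combining part 2 with a citation of \cite[Theorem~2.4.4]{browder1969}, the transitivity theorem for peak sets in the weak sense (a weak peak set for the restriction algebra on a weak peak set is again a weak peak set for the ambient algebra), applied to $E_x=\Pi\inv(\{x\})$ and $\{y\}$. Your argument performs the same reduction --- part 2 applied to $\{x\}$ makes $E_x$ a weak peak set for $B$, and $y\in\choquet{B_x}$ makes $\{y\}$ a weak peak set for the fibre algebra --- but instead of citing the transitivity theorem you prove its singleton case directly from the annihilating-measure characterisation (Theorems 2.4.7 and 2.4.9 of Browder) that the paper records in its preliminaries. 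The details are correct: $\mu_{E_x}$ is carried by the closed set $E_x$, so its restriction $\tilde\nu$ annihilates $B|E_x$ and, by uniform-norm continuity of integration against a fixed measure, the closure $B_x$; then $\tilde\nu_{\{y\}}=\mu(\{y\})\delta_y$ annihilates $B_x$, which contains the constants, forcing $\mu(\{y\})=0$; hence $\mu_{\{y\}}=0$ trivially annihilates $B$ for every annihilating measure $\mu$, and the characterisation, used as an equivalence exactly as the paper states it, gives $y\in\choquet B$. In effect you have reproved the precise special case of Browder's Theorem 2.4.4 that is needed; the paper's citation buys brevity, while your version buys self-containedness and makes explicit why part 2 applied to $\{x\}$ is exactly the required input.
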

\begin{proof}
  \ref{peak set permanence a} Suppose that $E$ is a peak set for $A$. Then it is easy to see that, if $f$ is a function which peaks on $E$, then $\Pi\adj(f)$ is a function which peaks on $F$ so that $F$ is a peak set for $B$.

  \ref{peak set permanence b} Suppose that $E$ is a peak set in the weak sense for $A$. Then there exists a family $\mathcal F$ of peak sets for $A$ such that $E=\bigcap_{C\in\mathcal F}C$. By part \ref{peak set permanence a}, we have ${\mathcal G:=\{\Pi\inv(C):C\in\mathcal F\}}$ is a family of peak sets for $B$, and ${\bigcap_{D\in\mathcal G}D=F}$. Thus $F$ is a peak set in the weak sense for $B$. This proves \ref{peak set permanence b}.

  \ref{peak set permanence c} This follows immediately from \ref{peak set permanence b} and \cite[Theorem~2.4.4]{browder1969}.
\end{proof}

It is certainly not true that, if $y$ is a strong boundary point for $B$ then $\Pi(y)$ is a strong boundary point for $A$; Cole's counterexample to the peak point conjecture does not have this property. We may restate \ref{peak set permanence c} as the following.

\begin{corollary}\label{Choquet boundary pullback restriction}
For each $x\in \choquet A$, we have $\choquet{B_x} = \choquet{B}\cap E_x$.
\end{corollary}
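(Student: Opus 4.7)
The plan is to verify the two set inclusions separately, noting that only one of them actually requires the hypothesis $x \in \choquet A$. For the inclusion $\choquet{B_x} \subseteq \choquet B \cap E_x$, this is essentially a repackaging of Proposition~\ref{peak set permanence}\ref{peak set permanence c}: if $y \in \choquet{B_x}$, then $y \in E_x$ (since $B_x$ is a uniform algebra on $E_x$), and since $x \in \choquet A$, part~\ref{peak set permanence c} of the proposition yields $y \in \choquet B$. So this direction is immediate.

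For the reverse inclusion $\choquet B \cap E_x \subseteq \choquet{B_x}$, I will argue directly (noting this direction does not use $x \in \choquet A$). Suppose $y \in \choquet B \cap E_x$, so $\{y\}$ is a peak set in the weak sense for $B$, and choose a family $\mathcal F$ of peak sets for $B$ with $\bigcap_{F \in \mathcal F} F = \{y\}$. For each $F \in \mathcal F$, pick a peaking function $f_F \in B$, so that $f_F|F \equiv 1$ and $\abs{f_F(z)} < 1$ for $z \in Y \setminus F$. The restriction $f_F|E_x$ lies in $B|E_x \subseteq B_x$, takes value $1$ on $F \cap E_x$, and has absolute value strictly less than $1$ on $E_x \setminus F$; hence $F \cap E_x$ is a peak set for $B_x$.

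Taking the intersection over $\mathcal F$, and using that $y \in E_x$,
\[
\bigcap_{F \in \mathcal F} (F \cap E_x) = \Bigl(\bigcap_{F \in \mathcal F} F\Bigr) \cap E_x = \{y\} \cap E_x = \{y\},
\]
so $\{y\}$ is a peak set in the weak sense for $B_x$, meaning $y \in \choquet{B_x}$. This closes the reverse inclusion and completes the proof.

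There is no genuine obstacle here; the corollary is essentially a repackaging of Proposition~\ref{peak set permanence}\ref{peak set permanence c} together with the routine observation that restricting a peaking function of $B$ to the fibre $E_x$ preserves the peaking property within $E_x$.
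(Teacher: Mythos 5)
Your proof is correct and follows essentially the same route as the paper: the paper states this corollary without a separate proof, presenting it as a restatement of Proposition~\ref{peak set permanence}.\ref{peak set permanence c}, which is exactly your forward inclusion, while your reverse inclusion $\choquet{B}\cap E_x\subseteq\choquet{B_x}$ (restrict each peaking function to the fibre; no hypothesis on $x$ needed) is precisely the routine fact the paper leaves implicit. Both directions of your argument are sound, including the tacit point that $y\in F\cap E_x$ keeps each restricted peak set non-empty.
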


We now show that a similar relationship holds between $\shilov A$ and $\shilov B$. Our proof below is based on a similar proof due to Lindberg \cite[Theorem~3.1]{lindberg1964}.

\begin{theorem}\label{shilov boundary extensions}
Suppose that $\Pi$ is an open map. Then for each $x\in\shilov A$, we have $\shilov{B_x} \subseteq \shilov{B}\cap E_x$.
\end{theorem}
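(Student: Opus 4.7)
The plan is to argue the contrapositive: assuming $y \in E_x$ and $y \not\in \shilov B$, I will show $y \not\in \shilov{B_x}$. If $y \not\in \shilov B$ then there is an open neighbourhood $V$ of $y$ in $Y$ with $V \cap \shilov B = \emptyset$; then $F := Y \setminus V$ is closed and contains $\shilov B$, so $F$ is a closed boundary for $B$ and $\abs{H}_Y = \abs{H}_F$ for every $H \in B$. My goal is to deduce that $E_x \setminus V$ is a closed boundary for $B_x$: this forces $\shilov{B_x} \subseteq E_x \setminus V$ and, since $y \in V$, gives $y \not\in \shilov{B_x}$, the desired contradiction.

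The construction proceeds as follows. By openness of $\Pi$, $\Pi(V)$ is an open neighbourhood of $x$ in $X$. Let $U$ range over the directed set of open neighbourhoods of $x$ contained in $\Pi(V)$. For each $U$, the hypothesis $x \in \shilov A$ yields $f \in A$ with $\abs{f}_X = 1$ and $\abs{f}_{X \setminus U} < 1$; raising to a high power $n$, I can arrange $\abs{f^n}_{X \setminus U} < \delta$ for any prescribed $\delta \in (0,1)$. Let $x_U \in U$ be a point at which $\abs{f^n}$ attains the value $1$. Fix any $\tilde h \in B$ and set $H := \Pi\adj(f^n)\tilde h \in B$. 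Since $\abs{\Pi\adj(f^n)}$ is identically $1$ on $E_{x_U}$,
\[
\abs{H}_Y \ge \abs{H}_{E_{x_U}} = \abs{\tilde h}_{E_{x_U}};
\]
on the other hand, splitting $F$ into $F \cap \Pi\inv(U)$ and $F \setminus \Pi\inv(U)$, on which $\abs{\Pi\adj(f^n)}$ is bounded by $1$ and $\delta$ respectively, gives
\[
\abs{H}_F \le \max\bigl(\abs{\tilde h}_{F \cap \Pi\inv(U)},\ \delta\,\abs{\tilde h}_Y\bigr).
\]
Combining with $\abs{H}_Y = \abs{H}_F$ yields $\abs{\tilde h}_{E_{x_U}} \le \max(\abs{\tilde h}_{F \cap \Pi\inv(U)}, \delta\,\abs{\tilde h}_Y)$.

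The main obstacle, and the place where openness of $\Pi$ is used in an essential way, is passing to the limit as $U$ shrinks to $\{x\}$ and $\delta \to 0$. A nested-compactness argument (using $\bigcap_U \Pi\inv(U) = E_x$) shows $\abs{\tilde h}_{F \cap \Pi\inv(U)} \to \abs{\tilde h}_{F \cap E_x} = \abs{\tilde h}_{E_x \setminus V}$. More importantly, openness of $\Pi$ implies that the map $\psi\colon x' \mapsto \abs{\tilde h}_{E_{x'}}$ is lower semi-continuous on $X$: given any $c < \psi(x)$, pick $y_0 \in E_x$ with $\abs{\tilde h(y_0)} > c$; the open set $W := \{z \in Y : \abs{\tilde h(z)} > c\}$ contains $y_0$, so $\Pi(W)$ is an open neighbourhood of $x$, and every $x' \in \Pi(W)$ admits a preimage in $W \cap E_{x'}$, whence $\psi(x') > c$. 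Since $x_U \to x$ along the net indexed by the $U$'s, lower semi-continuity gives $\liminf_U \abs{\tilde h}_{E_{x_U}} \ge \abs{\tilde h}_{E_x}$. Taking limits in the combined inequality produces $\abs{\tilde h}_{E_x} \le \abs{\tilde h}_{E_x \setminus V}$ for every $\tilde h \in B$; the uniform density of $B|E_x$ in $B_x$ then extends this to $\abs{g}_{E_x} \le \abs{g}_{E_x \setminus V}$ for every $g \in B_x$, exhibiting $E_x \setminus V$ as a closed boundary for $B_x$ and completing the argument.
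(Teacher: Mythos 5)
Your proposal is correct, but it takes a genuinely different route from the paper's. The paper argues directly, in the style of Lindberg: given $y_0\in\shilov{B_x}$ and a neighbourhood $U$ of $y_0$ in $Y$, it approximates a function $f_0\in B_x$ peaking near $y_0$ by some $f_1\in B$, covers $E_x\setminus U$ by finitely many open sets on which $f_1$ is uniformly small, uses openness of $\Pi$ to make $\Pi(U_{z_0})\cap\bigcap_{j}\Pi(U_{y_j})$ an open neighbourhood of $x$ (combined with Lemma~\ref{fibre neighbourhood lemma}), and multiplies $f_1$ by $\Pi\adj(g)$ for a suitable $g\in A$ concentrated near $x$; explicit $\varepsilon$-estimates then yield $\abs{f}_Y>\abs{f}_{Y\setminus U}$, so every closed boundary for $B$ meets $U$ and hence $y_0\in\shilov{B}$. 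You instead work contrapositively at the level of closed boundaries: from a closed boundary $F=Y\setminus V$ avoiding $y$ you derive, via powers of peaking functions indexed by shrinking neighbourhoods $U$ of $x$, the inequality $\abs{\tilde h}_{E_{x_U}}\le\max\bigl(\abs{\tilde h}_{F\cap\Pi\inv(U)},\,\delta\abs{\tilde h}_Y\bigr)$, and pass to the limit using nested compactness together with lower semicontinuity of $x'\mapsto\abs{\tilde h}_{E_{x'}}$ --- and it is precisely in that semicontinuity lemma that openness of $\Pi$ enters for you, rather than in pushing forward a finite cover as in the paper. Your approach trades the finite subcover and $\varepsilon$-bookkeeping for net-limit machinery; the paper's argument is more elementary and produces explicit near-peaking functions, which is the style reused elsewhere in the paper. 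One small point to patch: you implicitly assume $E_x\setminus V\neq\emptyset$, since otherwise $\abs{\tilde h}_{E_x\setminus V}$ is a supremum over the empty set and ``closed boundary for $B_x$'' is vacuous. This is genuinely true in your setting: since $\Pi\adj$ is isometric we have $\shilov A\subseteq\Pi(\shilov B)$ (as noted in the paper just before Proposition~\ref{peak set permanence}), so for $x\in\shilov A$ the fibre $E_x$ meets $\shilov B\subseteq F$; alternatively, taking $\tilde h=1$ in your limiting inequality rules the degenerate case out directly. Either way it is a one-line fix, not a gap.
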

\begin{proof}
Fix $x\in\shilov A$, let $y_0\in \shilov{B_x}$, and let $U$ be an open neighbourhood of $y_0$ in $Y$. We first show that, if $E_x\subseteq U$, then there exists $f\in B$ such that $\abs{f}_Y = 1 > \abs{f}_{Y\setminus U}$. By Lemma~\ref{fibre neighbourhood lemma}, there is an open neighbourhood $V$ of $x$ in $X$ such that $E_x\subseteq \Pi\inv(V)\subseteq U$. Since $x\in\shilov A$, there exists  $g\in A$ such that $\abs{g}_X = 1$ and $\abs{g}_{X\setminus V}<1$. Set $f:=\Pi\adj(g)$. Then $f\in B$, $\abs{f}_Y = 1$ and $\abs{f}_{Y\setminus U}\leq \abs{f}_{Y\setminus\Pi\inv(V)}<1$. In particular, if $E_x = \{y_0\}$, then $y_0\in \shilov{B}$. Hence we may assume that $E_x$ contains at least two points, and that $E_x\setminus U$ is non-empty.

Let $\varepsilon\in (0,1/12)$ and let $U_0:=U\cap E_x$. Then $U_0$ is an open neighbourhood of $y_0$ in $E_x$. Since $y_0\in\shilov{B_x}$, there exists $f_0\in B_x$ such that $\abs{f_0}_{E_x} = 1$ and $\abs{f_0}_{E_x\setminus U_0}< 1$. By replacing $f_0$ with $f_0^n$ for sufficiently large $n\in\N$, we may assume that $\abs{f_0}_{E_x}=1$ and $\abs{f_0}_{E_x\setminus U_0}<\varepsilon$.

Now take $f_1\in B$ such that $\abs{f_1 - f_0}_{E_x} < \varepsilon$. Then we have $\abs{f_1}_{E_x\setminus U}<2\varepsilon$ and 
\[
1-\varepsilon < \abs{f_1}_{E_x} < 1 + \varepsilon.
\]
For each $y\in E_x$, let $U_y$ be an open neighbourhood of $y$ in $Y$ such that, for all $z\in U_y$, we have
\[
\abs{f_1(z) - f_1(y)} < \varepsilon.
\]
Fix $z_0\in E_x$ such that $\abs{f_1(z_0)} = \abs{f_1}_{E_x}$. Then, for each $y\in U_{z_0}$, we have
\[
\abs{f_1(y)} \geq \abs{f_1(z_0)} - \abs{f_1(y) - f_1(z_0)} > 1 - 2\varepsilon.
\]
Note that we must have $z_0\in U$ and, replacing $U_{z_0}$ with $U_{z_0}\cap U$, we may assume that $U_{z_0}\subseteq U$.

By compactness of $E_x\setminus U$, there exists $y_1,\dots, y_m\in E_x\setminus U$ such that
\[
E_x\setminus U \subseteq \bigcup_{j=1}^m U_{y_j}.
\]
Let $V := \Pi(U_{z_0})\cap \bigcap_{j=1}^m \Pi(U_{y_j})$. Since $\Pi$ is an open map, $V$ is an open neighbourhood of $x$ in $X$. By Lemma~\ref{fibre neighbourhood lemma}, there exists an open neighbourhood $W$ of $x$ in $X$ such that $W\subseteq V$ and 
\[
E_x\subseteq \Pi\inv(W) \subseteq U\cup \bigcup_{j=1}^m U_{y_j}.
\]
Since $x\in\shilov{A}$, there exists $g\in A$ such that $\abs{g}_X = 1 > \abs{g}_{X\setminus W}$. As above, replacing $g$ with $g^n$ for some $n\in\N$ if necessary, we may assume that $\abs{g}_X=1$ and $\abs{g}_{X\setminus W}<\varepsilon/\abs{f_1}_Y$. Let $W_0$ be the subset of $X$ consisting of all $x'\in X$ such that $\abs{g(x')} > 1-\varepsilon$ and set $f:=\Pi\adj(g)f_1\in B$. Note that $W_0$ is an open set, $W_0\subseteq W$, and $\Pi\inv(W_0)\cap U_{z_0}\neq\emptyset$.

Let $y\in Y$. If $y\in Y\setminus \Pi\inv(W)$ then we have
\[
\abs{f(y)} \leq \abs{g(\Pi(y))}\abs{f_1}_Y < \varepsilon.
\]
If $y\in U_{y_k}$ for some $k\in\{1,\dots,m\}$, then we have
\[
\abs{f(y)} \leq \abs{g}_X(\abs{f_1(y) - f_1(y_j)} + \abs{f_1(y_j)}) < 2\varepsilon.
\]
In particular, if $y\in Y\setminus U$ then we have $\abs{f(y)}<2\varepsilon$.

If $y\in U_{z_0}\cap \Pi\inv(W_0)$ then we have
\[
\abs{f(y)} = \abs{g(\Pi(y))}\abs{f_1(y)} > (1-\varepsilon)(1-2\varepsilon) > 1-3\varepsilon.
\]
Since $U_{z_0}\cap \Pi\inv(W_0)\neq\emptyset$, we have $\abs{f}_Y>1-3\varepsilon > 3/4$ and, by the above, we also have $\abs{f}_{Y\setminus U} < 2\varepsilon<1/6$. It follows that $y_0\in \shilov{B}$.
\end{proof}

\begin{remark}
More is true in the above. If $x\in \shilov{A}$ and $y\in\shilov{B}\cap\tint_{Y}(E_x)$ then $y\in\shilov{B_x}$. A corollary of the local maximum modulus theorem, \cite[Corollary~6.2]{rossi1960}, suggests that $\shilov{B}\cap E_x\subseteq \shilov{B_x}$. However, this corollary is given without proof, and we have not been able to provide a proof for this inclusion.
\end{remark}

In the special case where $\shilov{B_x}=E_x$ for all $x\in\shilov x$, we have the following.

\begin{corollary}
Suppose that $\Pi$ is an open map. If we have $\shilov{B_x}=E_x$ for all $x\in \shilov{A}$ then $\shilov{B}\supseteq\Pi\inv(\shilov{A})$. In particular, this occurs if $B_x=C(E_x)$ for all $x\in X$.
\end{corollary}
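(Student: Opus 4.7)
The plan is to deduce the inclusion $\Pi\inv(\shilov A)\subseteq\shilov B$ as a direct consequence of Theorem~\ref{shilov boundary extensions}, and then verify the ``in particular'' clause by recalling that the Shilov boundary of $C(E_x)$ is all of $E_x$.

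First I would fix $x\in\shilov A$ and apply Theorem~\ref{shilov boundary extensions}, which (using the hypothesis that $\Pi$ is open) gives $\shilov{B_x}\subseteq\shilov B\cap E_x$. The additional assumption $\shilov{B_x}=E_x$ then immediately yields $E_x\subseteq\shilov B\cap E_x\subseteq\shilov B$. Taking the union over all $x\in\shilov A$, this rearranges to $\Pi\inv(\shilov A)=\bigcup_{x\in\shilov A}E_x\subseteq\shilov B$, as required.

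For the ``in particular'' statement, I would observe that $\shilov{C(E_x)}=E_x$ for every compact space $E_x$ (this is standard: every point of $E_x$ is a peak point of $C(E_x)$ via a suitable Urysohn function, so certainly lies in every closed boundary). Hence, if $B_x=C(E_x)$ for all $x\in X$, then in particular for all $x\in\shilov A$ we have $\shilov{B_x}=E_x$, and the first part of the corollary applies.

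The whole argument is essentially a single bookkeeping step on top of the already proved theorem, so there is no genuine obstacle; the only thing to be careful about is noting that the hypothesis $\shilov{B_x}=E_x$ is only needed for $x\in\shilov A$, not for all $x\in X$, which is why the ``in particular'' statement is weaker than strictly necessary but convenient in practice.
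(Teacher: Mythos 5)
Your proof is correct and is exactly the intended argument: the paper states this corollary without proof because it follows immediately from Theorem~\ref{shilov boundary extensions} in just the way you describe, namely $E_x=\shilov{B_x}\subseteq\shilov{B}\cap E_x$ for each $x\in\shilov{A}$, then taking the union of the fibres, with the ``in particular'' clause handled by the standard fact $\shilov{C(E_x)}=E_x$. Your closing observation that the hypothesis is only needed on fibres over $\shilov{A}$ is also accurate.
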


\section{Motivating example: Cole extensions}
\label{sec:Cole extensions}
Our aim is to introduce a new class of uniform algebra extensions, which preserve certain properties of the original uniform algebra. Our motivating example is Cole's construction of a counterexample to the peak point conjecture, where he constructs extensions of a given uniform algebra by adjoining square roots to a given function. In this section, we describe a modified construction---based on Cole's method---for adjoining roots to an arbitrary monic polynomial over a given uniform algebra. The most general case can be reconstructed by considering systems of such extensions. We refer the reader to \cite{dawson2003,cole1968} and \cite[Section~19]{stout1971} for additional information and references.

We define the set
\[
X^q := \{(x,z)\in X\times \C: h_0(x)+h_1(x)z+\dots+h_{n-1}(x)z^{n-1} + z^n=0\}.
\]
It is not hard to see that $X^q$ is a compact space (see \cite{dawson2003}). Let $\Pi_q:X^q\to X$ and ${p_q:X\to\C}$ denote the restrictions of the coordinate projections; the map $\Pi_q$ is continuous, surjective, and open. Let $A^q$ denote the uniform closure in $C(X^q)$ of the algebra generated by the functions $p_q$ and $\Pi_q\adj(f)$ ($f\in A$). It is not hard to see that $A^q$ is a uniform algebra on $X^q$. We summarise some of the properties of $A^q$ in the following proposition.

\begin{proposition}\label{simple Cole extension props}%
\begin{enumerate}
	\item\label{simple Cole extension props a} There exists a bounded linear map ${T:C(X^q)\to C(X)}$ with $\norm{T}=1$ such that $T(A^q)=A$ and $T\circ\Pi_q\adj =\id_{C(X)}$.
	\item\label{simple Cole extension props b} We have $A^q|\Pi_q\inv(\{x\}) = C(\Pi_q\inv(\{x\}))$ for all $x\in X$.
	\item\label{simple Cole extension props c} If $A$ is natural on $X$ then $A^q$ is natural on $X^q$.
	\item\label{simple Cole extension props d} We have $\shilov{A^q}=\Pi_q\inv(\shilov{A})$.
\end{enumerate}
\end{proposition}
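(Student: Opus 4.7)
The plan is to construct $T$ explicitly by fibrewise averaging and then to deduce \ref{simple Cole extension props b}--\ref{simple Cole extension props d} from it together with the general theory of Section~\ref{sec:uniform algebra extensions}. For each $x\in X$ the polynomial $q_x(z):=z^n+h_{n-1}(x)z^{n-1}+\dots+h_0(x)$ has $n$ roots $z_1(x),\dots,z_n(x)$ counted with multiplicity, and the normalised counting measure $\mu_x:=\frac{1}{n}\sum_{j=1}^{n}\delta_{(x,z_j(x))}$ is a probability measure supported on $E_x$. Continuous dependence of the roots of a monic polynomial on its coefficients (as a multiset) makes $x\mapsto\mu_x$ weak-$*$ continuous, so $T(f)(x):=\frac{1}{n}\sum_j f(x,z_j(x))$ defines a unital, positive, norm-one linear operator $T\colon C(X^q)\to C(X)$ with $T\circ\Pi_q\adj=\id_{C(X)}$ immediate from the fact that $\Pi_q\adj(g)$ is constant on each fibre. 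To verify $T(A^q)=A$ I would check $T(A^q)\subseteq A$ on the dense set of polynomials $f=\sum_k\Pi_q\adj(a_k)p_q^k$: by Lemma~\ref{averaging equivalences}\,\ref{averaging equivalences b}, $T(f)=\sum_k a_k\,T(p_q^k)$, and $T(p_q^k)(x)$ is (up to the factor $1/n$) the $k$th power sum of the roots of $q_x$, which by Newton's identities is a polynomial in $h_0,\dots,h_{n-1}$ and hence lies in $A$. The reverse inclusion is immediate from $T\circ\Pi_q\adj=\id$.

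For \ref{simple Cole extension props b}, each $E_x$ is finite and $p_q|E_x$ takes the pairwise distinct values $z_j(x)$, so the unital subalgebra of $C(E_x)$ generated by $\Pi_q\adj(A)|E_x$ (the constants) and $p_q|E_x$ separates the points of $E_x$; Lagrange interpolation on a finite set then gives $A^q|E_x = C(E_x)$, and the restriction is automatically closed because $C(E_x)$ is finite-dimensional. For \ref{simple Cole extension props c}, take $\varphi\in\charspace{A^q}$; then $\varphi\circ\Pi_q\adj\in\charspace{A}$ is of the form $\pe{x_0}$ by naturality of $A$. Setting $w_0:=\varphi(p_q)$ and applying $\varphi$ to the defining identity $p_q^n+\sum_k\Pi_q\adj(h_k)p_q^k=0$ forces $w_0^n+\sum_k h_k(x_0)w_0^k=0$, so $(x_0,w_0)\in X^q$; since $\varphi$ and $\pe{(x_0,w_0)}$ agree on the generators of a dense subalgebra, they agree on $A^q$.

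Part \ref{simple Cole extension props d} is the most delicate. For the inclusion $\Pi_q\inv(\shilov A)\subseteq\shilov{A^q}$, part \ref{simple Cole extension props b} gives $\shilov{(A^q)_x}=E_x$ for every $x\in X$, so the corollary immediately following Theorem~\ref{shilov boundary extensions} applies (using openness of $\Pi_q$) and yields the inclusion. The reverse inclusion amounts to showing that $\Pi_q\inv(\shilov A)$ is a closed boundary for $A^q$, and this is the main obstacle. My plan is to use the \emph{Cole norm} $N\colon A^q\to A$ defined on the dense polynomial subalgebra by $N(f)(x):=\prod_j f(x,z_j(x))$; as in \ref{simple Cole extension props a}, a symmetric-function argument puts all coefficients of $w\mapsto\prod_j(w-f(x,z_j(x)))\in A[w]$ into $A$, so $N(f)\in A$ and $N$ extends continuously to $A^q$. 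The hard part will be translating the multiplicative identity $|N(f)|_X=|N(f)|_{\shilov A}$ (which is automatic, since $N(f)\in A$) into the additive statement $|f|_{X^q}=|f|_{\Pi_q\inv(\shilov A)}$ about the fibrewise maximum $|f|_{E_x}=\max_j|f(x,z_j(x))|$; I expect to handle this by reducing, via the Arens--Hoffman presentation, to polynomials $f=\sum_{k=0}^{n-1}\Pi_q\adj(a_k)p_q^k$ and arguing that the fibrewise maximum is a continuous function of the roots of a polynomial in $w$ with $A$-valued coefficients, to which a spectral-radius argument can then be applied. This step is essentially the content of Lindberg's theorem \cite[Theorem~3.1]{lindberg1964} for algebraic extensions, transplanted to the uniform realisation $A^q$ of the Arens--Hoffman extension of $A$ by $q$.
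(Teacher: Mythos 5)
Your proposal is sound, and it is genuinely more self-contained than the paper, which in fact gives no proof of this proposition: the paper cites Cole \cite{cole1968} for the case $q(t)=t^2-h$ (where $T$ arises as an integral over a compact group), states that the general case of \ref{simple Cole extension props a} is in \cite{dawson2003} (see also \cite{narmaniya1982}), asserts that \ref{simple Cole extension props b} is immediate from the finiteness of the fibres, and cites Lindberg \cite{lindberg1973} for \ref{simple Cole extension props c} and \ref{simple Cole extension props d}. Your explicit fibrewise root-averaging $T(f)(x)=\frac1n\sum_j f(x,z_j(x))$, with Newton's identities placing the power sums $T(p_q^k)$ in $A$, is exactly the argument behind the cited general statement and correctly replaces Cole's group integral, which is unavailable for general $q$; your Lagrange-interpolation proof of \ref{simple Cole extension props b} actually supplies the point-separation by $p_q|\Pi_q\inv(\{x\})$ that the paper's one-line justification silently uses (finiteness of the fibre alone does not suffice); and your character-space argument for \ref{simple Cole extension props c} is the standard Arens--Hoffman one and is complete. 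For \ref{simple Cole extension props d}, the inclusion $\Pi_q\inv(\shilov{A})\subseteq\shilov{A^q}$ via \ref{simple Cole extension props b}, openness of $\Pi_q$, and the corollary to Theorem~\ref{shilov boundary extensions} is fine; the reverse inclusion you only sketch, but the sketch is the right one and closes routinely, so this is not a genuine gap: for $f=\sum_k\Pi_q\adj(a_k)p_q^k$ the coefficients $c_1,\dots,c_n$ of $w\mapsto\prod_j\bigl(w-f(x,z_j(x))\bigr)$ lie in $A$ by your symmetric-function argument, and the elementary root--coefficient estimates (the Cauchy bound $\abs{f}_{E_x}\leq 2\max_i\abs{c_i(x)}^{1/i}$ in one direction, $\abs{c_i(x)}\leq\binom{n}{i}\abs{f}_{E_x}^i$ in the other) give constants depending only on $n$ such that $\max_i\abs{c_i(x)}^{1/i}$ and $\abs{f}_{E_x}$ are uniformly comparable; since $\abs{c_i}_X=\abs{c_i}_{\shilov{A}}$ for each $i$, this yields $\abs{f}_{X^q}\leq C_n\abs{f}_{\Pi_q\inv(\shilov{A})}$, and applying this to $f^m$ and letting $m\to\infty$ removes the constant, showing $\Pi_q\inv(\shilov{A})$ is a closed boundary; density then handles general $f\in A^q$. (Your multiplicative norm function $N(f)$ alone would not suffice, for exactly the reason you identify---one small factor kills the product---but the coefficient-comparison-plus-power trick is the spectral-radius argument you name, and it is indeed the content of \cite[Theorem~3.1]{lindberg1964}, so your deferral is in any case no worse than the paper's wholesale citation of \cite{lindberg1973} for this clause.)
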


All of the above properties above are proved by Cole \cite{cole1968} in the case where ${q(t) = t^2-h}$ for some $h\in A$. Cole's proof of \ref{simple Cole extension props a} used an integral over a compact group; the general case is stated in \cite{dawson2003} (see also \cite{narmaniya1982}). Part \ref{simple Cole extension props b} follows immediately from the fact that $\Pi\inv(\{x\})$ contains at most $n$ points. The general case for parts \ref{simple Cole extension props c} and \ref{simple Cole extension props d} are proved in \cite{lindberg1973}. Many other properties of $A$ which are preserved in $A^q$ are mentioned in \cite{dawson2003}.

\section{Generalised Cole extensions}
\label{sec:generalised Cole extensions}
In this section we introduce our new class of extensions, which were originally described in correspondence between Cole and Feinstein. We shall see that many of the results from Section~\ref{sec:uniform algebra extensions} can be strengthened in this new setting and that any extension, in this class, of a non-trivial uniform algebra will be a non-trivial uniform algebra. Throughout this section, let $X$ be a compact space and let $A$ be a uniform algebra on $X$.

\begin{definition}
Let $B$ be a uniform algebra on a compact space $Y$. We say that $B$ is a {\em generalised Cole extension} of $A$ if there exists a continuous surjection $\Pi:Y\to X$ and a unital linear map $T:C(Y)\to C(X)$ with $\norm{T}=1$ such that $T\circ\Pi\adj=\id_{C(X)}$ and $T(B)=A$.
\end{definition}

The conditions $T(B)=A$ and $T\circ\Pi\adj=\id_{C(X)}$ in the above together imply that $\Pi\adj(A)\subseteq B$. The maps $\Pi$ and $T$ in the above will often be referred to as the maps {\em associated} with the extension.

We need not assume any additional conditions on the map $T$. By an application of Lemma~\ref{averaging equivalences} and elementary functional analysis, we see that $T$ has many desirable properties that are summarised in the following lemma, along with some additional facts that will be useful throughout.

\begin{lemma}\label{properties of map back GCE}
Let $B$ be a uniform algebra on a compact space $Y$. Suppose that $B$ is a generalised Cole extension of $A$ with associated maps $\Pi:Y\to X$ and $T:C(Y)\to C(X)$. Then $T$ satisfies each of the following conditions:
\begin{enumerate}
  \item\label{properties of map back GCE a} the projection $\Pi\adj\circ T:C(Y)\to C(Y)$ is averaging;
  \item\label{properties of map back GCE b} for each $f\in C(Y)$ and $g\in C(X)$ we have $T(f\Pi\adj(g))=T(f)g$;
  \item\label{properties of map back GCE c} for each $x\in X$ we have $\supp\mu_x\subseteq\Pi\inv(\{x\})$;
  \item\label{properties of map back GCE d} for each $x\in X$ and $f\in C(Y)$, we have $T(f)(x)\in\cconv{f(\Pi\inv(\{x\}))}$;
  \item\label{properties of map back GCE e} we have $T\adj(A\annih) \subseteq B\annih$;
  \item\label{properties of map back GCE f} we have $\Pi\dadj\circ T\adj = \id_{C(X)\dual}$ and $\Pi\dadj(B\annih) = A\annih$, where $\Pi\dadj = (\Pi\adj)\adj$.    
\end{enumerate}
In particular, if $A$ is non-trivial then $B$ is non-trivial.
\end{lemma}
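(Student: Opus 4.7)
The plan is to observe that the defining data of a generalised Cole extension---a continuous surjection $\Pi:Y\to X$ together with a unital bounded linear map $T:C(Y)\to C(X)$ with $\norm{T}=1$ satisfying $T\circ\Pi\adj=\id_{C(X)}$---exactly matches the ``suppose further'' hypotheses of Lemma~\ref{averaging equivalences}. Applying that lemma immediately yields parts \ref{properties of map back GCE a}--\ref{properties of map back GCE d}; positivity of $T$ is automatic from the standard fact recorded just before the definition of averaging operator, and the three equivalent reformulations together with the convex-hull inclusion then appear with no further work.

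For \ref{properties of map back GCE e}, I would argue directly: given $\lambda\in A\annih$ and $f\in B$, we have $T\adj(\lambda)(f)=\lambda(T(f))=0$ since $T(f)\in T(B)=A$. Hence $T\adj(\lambda)\in B\annih$, establishing $T\adj(A\annih)\subseteq B\annih$.

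For \ref{properties of map back GCE f}, the identity $\Pi\dadj\circ T\adj=\id_{C(X)\dual}$ is obtained by taking adjoints of $T\circ\Pi\adj=\id_{C(X)}$ and using contravariance of $(\,\cdot\,)\adj$. To prove $\Pi\dadj(B\annih)=A\annih$, the inclusion $\subseteq$ relies on the preceding remark that $\Pi\adj(A)\subseteq B$: if $\mu\in B\annih$ and $f\in A$ then $\Pi\dadj(\mu)(f)=\mu(\Pi\adj(f))=0$ because $\Pi\adj(f)\in B$. The reverse inclusion $\supseteq$ is then immediate from \ref{properties of map back GCE e}: for $\lambda\in A\annih$ we have $T\adj(\lambda)\in B\annih$ and $\Pi\dadj(T\adj(\lambda))=\lambda$ by the first half of \ref{properties of map back GCE f}, so $\lambda\in\Pi\dadj(B\annih)$.

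Finally, for the non-triviality conclusion: if $A$ is non-trivial then $A$ is a proper closed subspace of $C(X)$, so the Hahn--Banach theorem produces a non-zero $\lambda\in A\annih$. By \ref{properties of map back GCE f} we may write $\lambda=\Pi\dadj(\mu)$ for some $\mu\in B\annih$, and $\mu\neq 0$ since $\Pi\dadj(\mu)\neq 0$. Hence $B\annih\neq\{0\}$ and $B\neq C(Y)$, so $B$ is non-trivial. I anticipate no serious obstacle: once the match with Lemma~\ref{averaging equivalences} is observed, parts \ref{properties of map back GCE a}--\ref{properties of map back GCE d} come for free and the remaining items reduce to one-line adjoint manipulations; the only point requiring attention is keeping the directions of $\Pi\adj:C(X)\to C(Y)$ and $\Pi\dadj:C(Y)\dual\to C(X)\dual$ (as well as the distinction between operator adjoint $T\adj$ and the annihilator $(\cdot)\annih$) straight throughout.
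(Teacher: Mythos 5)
Your proposal is correct and takes essentially the same route as the paper: the paper's one-sentence proof is precisely ``an application of Lemma~\ref{averaging equivalences} and elementary functional analysis'', and you apply that lemma (whose unital, norm-one case matches exactly the defining data of a generalised Cole extension) to obtain parts 1--4, then carry out the elementary adjoint and Hahn--Banach manipulations for parts 5--6 and the non-triviality assertion. Your appeal to the paper's preceding remark that $\Pi\adj(A)\subseteq B$ in proving the inclusion $\Pi\dadj(B\annih)\subseteq A\annih$ is likewise what the paper intends, so your write-up simply supplies the details the paper leaves to the reader.
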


An alternative proof of the final assertion is an easy modification of a similar proof for Cole extensions found in \cite{dawson2003t}.

Note that, if $B$ is a uniform algebra on a compact space $Y$ and $B$ is a generalised Cole extension of $A$, with associated maps $\Pi$ and $T$, then we have $B\cap \Pi\adj(C(X)) = \Pi\adj(A)$. This follows from clause \ref{properties of map back GCE e} above.

Cole extensions are generalised Cole extensions, by Proposition~\ref{simple Cole extension props}, but the converse is not true in general. We now give an example of a generalised Cole extension that is not a Cole extension; further examples will be given in the final section.

\begin{example}
  Let $A(\Dbar)$ be the disk algebra (here $\Dbar$ is the closed unit disk in $\C$). Then $A(\Dbar)\check\otimes A(\Dbar)$, the injective tensor product of $A(\Dbar)$ with itself, can be identified with a uniform algebra on $\Dbar\times \T$. Let $\rho:\Dbar\times\T\to\Dbar$ denote the coordinate projection onto the first coordinate, and let $\tau:\Dbar\to\T$ denote the function $\tau(z) = (z,1)$ $(z\in\Dbar)$. Then both $\rho$ and $\tau$ are continuous, and $\rho\circ\tau = \id_{\Dbar}$. Now we see that $A(\Dbar)\check\otimes A(\Dbar)$ is a generalised Cole extension of $A(\Dbar)$, with associated maps $\rho\adj:C(\Dbar)\to C(\Dbar\times\T)$ and $\tau\adj:C(\Dbar\times\T)\to C(\Dbar)$.

  To see that this does not give a Cole extension, take any $z\in\Dbar$. Then $\rho\inv(\{z\}) = \T$, which is connected. However, it is not hard to see that if $A(\Dbar)\check\otimes A(\Dbar)$ was a Cole extension of $A(\Dbar)$, then $\rho\inv(\{z\})$ would be (totally) disconnected. Also note that $A(\Dbar)\check\otimes A(\Dbar)$ is not natural on $\Dbar\times\T$, and that $\tau\adj$ is actually a homomorphism.
\end{example}

As for general uniform algebra extensions, a generalised Cole extension $B$ of $A$ need not be natural on the given compact space $Y$ even if $A$ is natural on $X$, as demonstrated by the above example. However, just as in the general case, we instead consider the Gelfand transform of $B$ when $A$ is natural on $X$. This is made precise in the following proposition; the proof is an easy exercise.

\begin{proposition}
Let $B$ be a uniform algebra on a compact space $Y$. Suppose that $A$ natural on $X$ and that $B$ is a generalised Cole extension of $A$ with associated maps $\Pi:Y\to X$ and $T:C(Y)\to C(X)$. Then $\widehat B$ is a generalised Cole extension of $A$ with associated maps $\widehat \Pi:\charspace B\to X$ and $\widehat T:C(\charspace B)\to C(X)$ given by $\widehat\Pi(\varphi) = \varphi\circ\Pi\adj$ $(\varphi\in\charspace B)$ and $\widehat T(f) = T(f|Y)$ $(f\in C(\charspace B))$.
\end{proposition}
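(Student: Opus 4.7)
The plan is to take the defining data $(\Pi, T)$ of the extension and check by direct computation that $(\widehat\Pi, \widehat T)$ satisfies each clause of the definition of a generalised Cole extension. Identifying $Y$ with its canonical image in $\charspace B$ via $y \mapsto \pe y$, the two computational facts that will do almost all the work are the identities
\[
\widehat b|Y = b \qquad (b\in B), \qquad (g\circ \widehat\Pi)|Y = \Pi\adj(g) \qquad (g\in C(X)),
\]
both of which follow immediately from unpacking the definitions of the Gelfand transform and of $\widehat\Pi$.

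First I would verify that $\widehat B$ is a uniform algebra on $\charspace B$; this is standard, since $B$ is semisimple (being a uniform algebra) and the Gelfand transform is an isometric isomorphism of $B$ onto $\widehat B$. Next I would check that $\widehat\Pi$ is a continuous surjection onto $X$. Weak-$*$ continuity is immediate: for each $f\in A$, the map $\varphi\mapsto \widehat\Pi(\varphi)(f) = \varphi(\Pi\adj(f))$ is weak-$*$ continuous on $\charspace B$. Each value $\widehat\Pi(\varphi)$ lies in $\charspace A$, which equals $X$ by naturality. For surjectivity, given $x\in X$, pick any $y\in\Pi\inv(\{x\})$ (non-empty since $\Pi$ is surjective); then $\widehat\Pi(\pe y)(f) = f(\Pi(y)) = f(x)$ for $f\in A$, so $\widehat\Pi(\pe y) = \pe x$, which is $x$ under the identification $\charspace A = X$.

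For $\widehat T$, the restriction map $\rho\colon C(\charspace B)\to C(Y)$, $f\mapsto f|Y$, is unital and contractive, so $\widehat T = T\circ\rho$ is linear with $\widehat T(1) = T(1) = 1$ and $\norm{\widehat T}\leq \norm{T}\,\norm{\rho}\leq 1$; unitality then forces $\norm{\widehat T} = 1$. Using the restriction identities above,
\[
\widehat T(\widehat\Pi\adj(g)) = T\bigl((g\circ\widehat\Pi)|Y\bigr) = T(\Pi\adj(g)) = g \qquad (g\in C(X)),
\]
which gives $\widehat T\circ\widehat\Pi\adj = \id_{C(X)}$. Similarly, for $b\in B$, we have $\widehat T(\widehat b) = T(\widehat b|Y) = T(b)$, so $\widehat T(\widehat B) = T(B) = A$, completing the verification.

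The main obstacle is essentially bookkeeping, as the statement is rightly flagged as an easy exercise; the one mildly subtle point is ensuring that $\widehat\Pi$ is surjective \emph{onto $X$} rather than merely into $\charspace A$, and this is precisely where the hypothesis that $A$ is natural on $X$ is used.
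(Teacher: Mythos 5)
Your proof is correct: the paper leaves this proposition as an easy exercise, and your direct verification of each clause of the definition (via the identities $\widehat b|Y=b$ and $(g\circ\widehat\Pi)|Y=\Pi\adj(g)$) is precisely the intended routine argument. You also correctly isolate the one genuinely non-cosmetic point, namely that naturality of $A$ on $X$ is what makes $\widehat\Pi$ a surjection onto $X$ rather than into $\charspace A$ --- exactly the remark the paper makes in Section~\ref{sec:uniform algebra extensions}.
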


Clearly the linear map associated with a generalised Cole extension need not be a homomorphism in general; one can easily show that the averaging map associated with a Cole extension is not multiplicative. The following proposition gives necessary and sufficient conditions for this to be the case.

\begin{proposition}\label{conditions for homomorphism}
Let $B$ be a uniform algebra on a compact space $Y$ and let $E\subseteq X$. Suppose that $B$ is a generalised Cole extension of $A$ with associated maps $\Pi$ and $T$. If, for each $x\in E$, $\mu_x$ is a representing measure for some character $\psi_x$ on $B$, then we have $T(fg)(x) = T(f)(x) T(g)(x)$ $(f,g\in B)$ for all $x\in E$. In particular, $T|B$ is a homomorphism if and only if, for each $x\in X$, $\mu_x$ is a representing measure for some character $\psi_x$ on $B$.
\end{proposition}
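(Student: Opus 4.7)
The plan is to unwind the definitions: the measure $\mu_x$ on $Y$ is characterised by $T(f)(x)=\int_Y f\,\dm\mu_x$ for all $f\in C(Y)$, and since $T$ is unital with $\norm{T}=1$ it is positive (by the remark in the preamble to Lemma~\ref{averaging equivalences}), so each $\mu_x$ is a positive measure with $\mu_x(Y)=T(1)(x)=1$, \ie $\mu_x$ is automatically a probability measure. The main content of the proposition is therefore just translating multiplicativity at a point into the representing-measure condition.

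For the first claim, fix $x\in E$ and let $\psi_x\in\charspace B$ be the character for which $\mu_x$ is a representing measure. Then for all $f,g\in B$ we have
\[
T(fg)(x)=\int_Y fg\,\dm\mu_x=\psi_x(fg)=\psi_x(f)\psi_x(g)=T(f)(x)T(g)(x),
\]
where the middle equality uses that $\mu_x$ represents $\psi_x$ and that $fg\in B$, and the last uses the same representing-measure identity applied to $f$ and to $g$ separately. This gives the pointwise multiplicative identity on $E$.

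For the ``in particular'' statement, one direction is immediate: if for every $x\in X$ the measure $\mu_x$ represents some character on $B$, applying the first part with $E=X$ yields $T(fg)(x)=T(f)(x)T(g)(x)$ for all $x\in X$ and $f,g\in B$, so $T|B$ is a homomorphism. Conversely, suppose $T|B$ is a homomorphism. For each $x\in X$ define $\psi_x:B\to\C$ by $\psi_x(f):=T(f)(x)$. Linearity is clear, multiplicativity follows from the assumption that $T|B$ is a homomorphism, and $\psi_x(1)=T(1)(x)=1$ since $T$ is unital; hence $\psi_x\in\charspace B$. Moreover,
\[
\psi_x(f)=T(f)(x)=\int_Y f\,\dm\mu_x\qquad (f\in B),
\]
and $\mu_x$ is a probability measure by the opening remark, so $\mu_x$ is a representing measure for $\psi_x$.

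I don't expect any genuine obstacle here; the only step that requires any thought is checking that $\mu_x$ is automatically a probability measure (so that it is eligible to be a representing measure), and this is exactly where the hypotheses that $T$ is unital and $\norm{T}=1$ are used.
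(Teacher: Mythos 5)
Your proof is correct and follows essentially the same route as the paper: the forward direction is the identical one-line computation $T(fg)(x)=\int_Y fg\dm\mu_x=\psi_x(fg)=\psi_x(f)\psi_x(g)=T(f)(x)T(g)(x)$, and in the converse your direct definition $\psi_x(f):=T(f)(x)$ is exactly the paper's character $\theta\adj(\pe x)$ (with $\theta:=T|B$) written out explicitly. Your preliminary check that each $\mu_x$ is automatically a probability measure, because $T$ is unital with $\norm{T}=1$ and hence positive, is a detail the paper leaves implicit (it is established in Lemma~\ref{averaging equivalences}), so nothing is missing.
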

\begin{proof}
To prove the first assertion, suppose that, for each $x\in E$, $\mu_x$ is a representing measure for some character $\psi_x$ on $B$. Let $x\in E$ and let $f,g\in B$. Then we have
\[
T(fg)(x) = \int_Y fg\dm\mu_x = \psi_x(fg) = \psi_x(f)\psi_x(g) = T(f)(x)T(g)(x),
\]
which proves the first assertion. If the above holds with $E=X$, then it follows that $T(fg) = T(f)T(g)$ for all $f,g\in B$ and so $T|B$ is a homomorphism.

Now suppose that $T|B$ is a homomorphism. Let $\theta:=T|B$. Then $\theta\adj$ maps $\charspace A$ into $\charspace B$ and so, for each $x\in X$, there exists $\psi_x\in\charspace B$ such that $\theta\adj(\pe x) = \psi_x$. Then, for all $f\in B$, we have
\[
\psi_x(f) = \pe x(\theta(f)) = T(f)(x) = \int_Y f \dm\mu_x
\]
so that $\mu_x$ is a representing measure for $\psi_x$. This completes the proof.
\end{proof}

In the above proof, we necessarily have that the $\psi_x$ are distinct since $T|B$ is surjective; moreover, we have $\psi_x\circ\Pi\adj = \pe x$. The above result can be seen as a generalisation of \cite[Theorem~6.7]{wright1961}. Indeed, take $B=C(Y)$ and $A = C(X)$ in the above. Then the conclusion is $T$ is a homomorphism if and only if $\mu_x$ is a representing measure for $C(Y)$. But all representing measures for $C(Y)$ point mass measures, it follows that $\mu_x = \delta_y$ for some $y\in Y$. Thus $T$ is a homomorphism if and only if $T = \rho\adj$ for some continuous map $\rho:X\to Y$ with $\Pi\circ\rho = \id_{X}$.

In the setting of generalised Cole extensions, some results from Section~\ref{sec:uniform algebra extensions} can be strengthened.

\begin{proposition}
  Let $B$ be a uniform algebra on a compact space $Y$. Suppose that $B$ is a generalised Cole extension of $A$ with associated maps $\Pi:Y\to X$ and $T:C(Y)\to C(X)$. Let $E\subseteq X$ and $F = \Pi\inv(E)$. Then
\begin{enumerate}    
  \item\label{GCE property correspondence a} $E$ is a peak set for $A$ if and only if $F$ is a peak set for $B$;
  \item\label{GCE property correspondence b} $E$ is a peak set in the weak sense for $A$ if and only if $F$ is a peak set in the weak sense for $B$.    
\end{enumerate}    
\end{proposition}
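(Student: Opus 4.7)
The forward implications of both \ref{GCE property correspondence a} and \ref{GCE property correspondence b} are already given by Proposition~\ref{peak set permanence}, so the plan is to establish the converses.

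For the converse of \ref{GCE property correspondence a}, the plan is to use $g := T(f)$ as the candidate peaking function on $E$, where $f \in B$ is a function that peaks on $F$. Then $g \in A$ because $T(B) = A$, and the peaking conditions can be verified via clause \ref{properties of map back GCE d} of Lemma~\ref{properties of map back GCE}. For $x \in E$, the fibre $\Pi\inv(\{x\})$ lies in $F$, so $f \equiv 1$ there and $g(x) \in \cconv{\{1\}} = \{1\}$. For $x \in X \setminus E$, the fibre is a compact subset of $Y \setminus F$, so by continuity $|f|$ is bounded by some $c_x < 1$ there; hence $\cconv{f(\Pi\inv(\{x\}))}$ is contained in the closed disk of radius $c_x$ and $|g(x)| \leq c_x < 1$.

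For the converse of \ref{GCE property correspondence b}, the strategy is to invoke the characterisation of peak sets in the weak sense via annihilating measures recalled in Section~\ref{sec:preliminaries}: $E$ is a peak set in the weak sense for $A$ if and only if $\nu_E \in A\annih$ for every $\nu \in A\annih$. Given $\nu \in A\annih$, I would first lift it to $\tilde\nu := T\adj(\nu)$, which lies in $B\annih$ by clause \ref{properties of map back GCE e}. Since $F$ is a peak set in the weak sense for $B$, the restriction $\tilde\nu_F$ also lies in $B\annih$, and clause \ref{properties of map back GCE f} then gives $\Pi\dadj(\tilde\nu_F) \in A\annih$. The final step is to identify this pushforward with $\nu_E$, which follows from $F = \Pi\inv(E)$ together with $\Pi\dadj\circ T\adj = \id$: for each Borel set $S \subseteq X$,
\[
\Pi\dadj(\tilde\nu_F)(S) = \tilde\nu(\Pi\inv(S) \cap F) = \tilde\nu(\Pi\inv(S \cap E)) = \nu(S \cap E) = \nu_E(S).
\]

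I expect the main conceptual obstacle to be recognising that \ref{GCE property correspondence b} should be reduced to an assertion about annihilating measures so that the adjoint structure from Lemma~\ref{properties of map back GCE} can be brought to bear; once that reduction is made, the argument is essentially bookkeeping. By contrast, the converse of \ref{GCE property correspondence a} is more elementary and relies only on the averaging property \ref{properties of map back GCE d}.
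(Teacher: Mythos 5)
Your proof is correct and takes essentially the same route as the paper: the converse of part 1 via applying $T$ to the peaking function and using the convex-hull property, and the converse of part 2 via the annihilating-measure characterisation, lifting $\nu$ to $T\adj(\nu)\in B\annih$, restricting to $F$, and pushing forward by $\Pi\dadj$. The only difference is that you spell out the identification $\Pi\dadj(\tilde\nu_F)=\nu_E$ (and the compactness bound $c_x<1$ in part 1), steps the paper leaves as ``not hard to see''.
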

\begin{proof}
  \ref{GCE property correspondence a}. By Proposition~\ref{peak set permanence}.\ref{peak set permanence a}, if $E$ is a peak set for $A$ then $F$ is a peak set for $B$. Conversely, if $F$ is a peak set for $B$ then there  exists $g\in B$ which peaks on $F$. Set $f:=T(g)$. Then, for each $x\in E$, we have
    \[
    f(x) = T(g)(x) \in \cconv{g(\Pi\inv(\{x\}))}\subseteq \cconv{g(F)}\subseteq\{1\}.
    \]
    For each $x\in X\setminus E$, we have $\abs{g(y)}<1$ for all $y\in \Pi\inv(\{x\})$, and since we have ${f(x)\in\cconv{g(\Pi\inv(\{x\}))}}$ it follows that $\abs{f(x)}<1$. Thus $f$ peaks on $E$ and so $E$ is a peak set for $A$. This completes the proof.

  \ref{GCE property correspondence b}. By Proposition~\ref{peak set permanence}.\ref{peak set permanence b}, if $E$ is a peak set in the weak sense for $A$, then $F$ is a peak set in the weak sense for $B$.

Suppose that $F$ is a peak set in the weak sense for $B$, and let $\mu$ be an annihilating measure for $B$. We must prove that $\mu_E$ is an annihilating measure for $A$. By Proposition~\ref{properties of map back GCE}.\ref{properties of map back GCE e}, $\nu:=T\adj(\mu)$ is an annihilating measure for $B$ such that $\pushforward\Pi\nu=\mu$. By the above statement, $\nu_F$ is also an annihilating measure for $B$. It is not hard to see that $\pushforward\Pi{\nu_F}=\mu_E$. Thus $\mu_E$ is an annihilating measure for $A$ and so $E$ is a peak set in the weak sense.
\end{proof}

The above indicates that generalised Cole extensions have a ``local nature''. This can be made precise in the following proposition.

\begin{proposition}
  Let $B$ be a uniform algebra on a compact space $Y$. Suppose that $B$ is a generalised Cole extension of $A$ with associated maps $\Pi:Y\to X$ and $T:C(Y)\to C(X)$. Let $K$ be a compact subset of $X$ and let $L = \Pi\inv(K)$. Let $\Pi_K = \Pi|L$ and let $T_K:C(L)\to C(K)$ be given by $T_K(f) = T(f)|K$ $(f\in B)$. Then $B_L$ is a generalised Cole extension of $A_K$ with associated maps $\Pi_K$ and $T_K$.
\end{proposition}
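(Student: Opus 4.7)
The plan is to verify directly that $(\Pi_K, T_K)$ satisfies each clause of the definition of a generalised Cole extension. The map $\Pi_K = \Pi|L$ is plainly continuous, and it is surjective onto $K$ because every $x \in K$ has non-empty preimage under $\Pi$ which lies in $L = \Pi\inv(K)$. Both $A_K$ and $B_L$ are uniform algebras on $K$ and $L$ respectively, since $A$ and $B$ separate points and contain the constants. Thus the only real content is to extract from $T$ a unital norm-one linear map $T_K : C(L) \to C(K)$ satisfying $T_K \circ \Pi_K\adj = \id_{C(K)}$ and $T_K(B_L) = A_K$.

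The crucial step is that $T_K$ is well-defined. Given $f \in C(L)$, I would choose any Tietze extension $\tilde f \in C(Y)$ and set $T_K(f) := T(\tilde f)|K$. Independence from the choice of $\tilde f$ is where Lemma~\ref{properties of map back GCE}.\ref{properties of map back GCE c} enters: if $\tilde f_1|L = \tilde f_2|L$, then for every $x \in K$ the support of $\mu_x$ lies in $\Pi\inv(\{x\}) \subseteq L$, so $T(\tilde f_1 - \tilde f_2)(x) = \int_Y (\tilde f_1 - \tilde f_2) \dm\mu_x = 0$. The formula $T_K(f)(x) = \int_Y \tilde f \dm\mu_x$ then exhibits $T_K$ as linear and unital, with $\norm{T_K} \leq 1$ because each $\mu_x$ is a probability measure, and unitality forces $\norm{T_K} = 1$.

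For the identity $T_K \circ \Pi_K\adj = \id_{C(K)}$, given $g \in C(K)$ I would pick any Tietze extension $\tilde g \in C(X)$, note that $\Pi\adj(\tilde g) \in C(Y)$ restricts to $\Pi_K\adj(g)$ on $L$, and compute $T_K(\Pi_K\adj(g)) = T(\Pi\adj(\tilde g))|K = \tilde g|K = g$. For the equality $T_K(B_L) = A_K$: the inclusion $\subseteq$ is immediate from $T_K(f|L) = T(f)|K \in A|K$ for $f \in B$, together with continuity of $T_K$. For the reverse inclusion, I would first show $\Pi_K\adj(A_K) \subseteq B_L$: for $f \in A$ we have $\Pi_K\adj(f|K) = \Pi\adj(f)|L \in B|L \subseteq B_L$, and the general case follows by uniform approximation and continuity of $\Pi_K\adj$. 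Applying the already-established identity $T_K \circ \Pi_K\adj = \id$ then writes every $g \in A_K$ as $T_K(\Pi_K\adj(g))$ with $\Pi_K\adj(g) \in B_L$. The only obstacle of substance is the well-definedness of $T_K$, which is precisely where the fibre-support property of the measures $\mu_x$ is required; every other step is a routine application of Tietze extension, restriction, and continuity.
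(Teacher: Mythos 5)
Your proof is correct and follows essentially the same route as the paper: the key identities $T_K(f|L) = T(f)|K$ and $\Pi_K\adj(g|K) = \Pi\adj(g)|L$, combined with density of $B|L$ in $B_L$ and of $A|K$ in $A_K$, are exactly the paper's argument for $T_K(B_L) = A_K$. The only difference is that you explicitly verify well-definedness of $T_K$ on all of $C(L)$ via Tietze extension and the fibre-support property $\supp\mu_x \subseteq \Pi\inv(\{x\}) \subseteq L$, a point the paper's proof dismisses as clear; this is a worthwhile clarification but not a different method.
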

\begin{proof}
Clearly $\Pi_K$ is a continuous surjection from $L$ onto $K$, and $T_K$ is a unital linear map with $\norm{T_K} = 1$, and $T_K\circ\Pi_K\adj = \id_{C(K)}$. It remains to see that $T_K(B_L) = A_K$. Since $T_K(f|L) = T(f)|K$, for all $f\in B$, we have $T_K(f|L)\in A_K$ for all $f\in B$. Since $B|L$ is dense in $B_L$, it follows that $T_K(B_L)\subseteq A_K$. The reverse inclusion follows from the fact that $\Pi_K\adj(g|K) = \Pi\adj(g)|L$ for all $g\in A$ and that $A$ is dense in $A_K$. This completes the proof.
\end{proof}

Next we ask whether we can improve Theorem~\ref{shilov boundary extensions} for generalised Cole extensions. Using the linear map associated with a generalised Cole extension, we obtain the following characterisation of Shilov boundary points for an extension that are mapped into the Shilov boundary for $A$.

\begin{theorem}
Let $B$ be a uniform algebra on a compact space $Y$. Suppose that $B$ is a generalised Cole extension of $A$ with associated maps $\Pi:Y\to X$ and $T:C(Y)\to C(X)$. Let $y_0\in \shilov B$. Then $\Pi(y_0)\in\shilov A$ if and only if, for each open neighbourhood $U$ of $\Pi(y_0)$, there exist a peak set $K$ for $B$ and $x\in X$ such that $K\subseteq\Pi\inv(U)$ and $\mu_x(K)>0$.
\end{theorem}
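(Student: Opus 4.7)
Set $x_0 := \Pi(y_0)$. The plan is to handle the two implications separately, with the easier direction built from pulling back a peak set and the harder direction deriving a contradiction from a uniform estimate on $T(f^n)$.

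For the forward direction, assume $x_0 \in \shilov A$ and let $U$ be an open neighbourhood of $x_0$. Since $\shilov A = \overline{\choquet A}$, there exists $x_1 \in U \cap \choquet A$, so $\{x_1\}$ is the intersection of a family $\mathcal F$ of peak sets for $A$. The open cover $\{X\setminus F : F \in \mathcal F\}$ of the compact set $X\setminus U$ admits a finite subcover, and since finite intersections of peak sets are peak sets (by averaging peaking functions), this produces a peak set $F_0$ for $A$ with $x_1 \in F_0 \subseteq U$. By Proposition~\ref{peak set permanence}(\ref{peak set permanence a}), $K := \Pi\inv(F_0)$ is a peak set for $B$ contained in $\Pi\inv(U)$, and Lemma~\ref{properties of map back GCE}(\ref{properties of map back GCE c}) gives $\supp \mu_{x_1} \subseteq \Pi\inv(\{x_1\}) \subseteq K$, so $\mu_{x_1}(K) = 1$.

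For the backward direction, suppose for contradiction that $x_0 \notin \shilov A$. Then there exists an open neighbourhood $V$ of $x_0$ such that $X\setminus V$ is a closed boundary for $A$. Apply the hypothesis to $V$ to obtain a peak set $K$ for $B$ with $K \subseteq \Pi\inv(V)$, some $x' \in X$ with $\mu_{x'}(K) > 0$, and a function $f \in B$ peaking on $K$. Because $\supp \mu_{x'} \subseteq \Pi\inv(\{x'\})$ meets $K$, one has $x' \in V$. The key observation is that $L := \Pi\inv(X\setminus V)$ is a compact set disjoint from $K$, so by compactness and continuity of $|f|$ (which is $<1$ off $K$), the constant $\beta := \sup_L |f|$ satisfies $\beta < 1$.

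Now consider $T(f^n) \in A$. For any $x \in X\setminus V$, $\supp \mu_x \subseteq L$ and $\mu_x$ is a probability measure (Lemma~\ref{properties of map back GCE} and positivity of $T$), hence $|T(f^n)(x)| \le \beta^n$, giving $|T(f^n)|_{X\setminus V} \le \beta^n \to 0$. On the other hand, $f^n \to \mathbf 1_K$ pointwise on $Y$ (equal to $1$ on $K$, tending to $0$ off $K$), and $|f^n| \le 1$; dominated convergence applied to $\mu_{x'}$ gives $T(f^n)(x') \to \mu_{x'}(K) > 0$. For $n$ large enough, $|T(f^n)|_X \ge |T(f^n)(x')| > \beta^n \ge |T(f^n)|_{X\setminus V}$, contradicting the hypothesis that $X\setminus V$ is a boundary for $A$. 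The main obstacle is the construction of this uniform bound: the argument works precisely because the peak set $K$ is squeezed into a fibred neighbourhood of $x_0$, which forces $|f|$ to be uniformly bounded away from $1$ on $\Pi\inv(X\setminus V)$, while the positivity of $\mu_{x'}(K)$ guarantees that $T$-averages of $f^n$ retain nontrivial mass.
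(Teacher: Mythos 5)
Your proof is correct and takes essentially the same approach as the paper's: the forward direction pulls back a peak set $L\subseteq U$ to $\Pi\inv(L)$ (you merely make explicit, via a Choquet point and finite intersections of peak sets, the compactness argument the paper leaves implicit), and the other direction rests on the identical mechanism of applying dominated convergence to get $T(f^n)(x')\to\mu_{x'}(K)>0$ while $\abs{T(f^n)}$ decays geometrically off the chosen neighbourhood because $\abs{f}$ is bounded away from $1$ on the compact complement of the pulled-back set. Your only departure is cosmetic: you frame that direction as a contradiction against the single closed boundary $X\setminus V\supseteq\shilov A$, whereas the paper argues directly that every open neighbourhood of $\Pi(y_0)$ must meet each closed boundary.
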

\begin{proof}
First assume that, for each open neighbourhood $U$ of $\Pi(y_0)$, there exists a peak set $K$ for $B$ and $x\in X$ such that $K\subseteq \Pi\inv(U)$ and $\mu_x(K)>0$.

Let $x_0=\Pi(y)$ and fix an open neighbourhood $U$ of $x_0$. By assumption, there exists a peak set $K$ for $B$ and $x\in U$ such that $\mu_x(K)>0$. Let $f\in B$ be a function which peaks on $K$. Then the sequence $(f^n)$ is uniformly bounded and converges pointwise to $\chi_K$, the characteristic function for $K$. By the dominated convergence theorem, we have
\[
\lim_{n\to\infty}\int_Y f^n\dm\mu_x = \mu_x(K)>0
\]
Then there exists $n\in\N$, such that $\abs{f^n}_{Y\setminus \Pi\inv(U)}\leq \mu_x(K)/4$ and
\[
\abs[\Bigg]{\int_Y f^n\dm\mu_x} > \frac{3\mu_x(K)}{4}.
\]
Set $h:=T(f^n)$. Then we have
\[
\abs{h(x)} >  \frac{3\mu_x(K)}{4}
\]
and, for each $x'\in X\setminus U$, we have
\[
\abs{h(x)} \leq \mu_x(\Pi\inv(\{x\})\setminus K)\frac{\mu_x(K)}{4} \leq \frac{\mu_x(K)}{4}.
\]
Thus $\abs{h}_X>\abs{h}_{X\setminus U}$. It follows that $x\in \shilov A$.

Now assume that $x_0:=\Pi(y_0)\in\shilov A$. Let $U$ be an open neighbourhood $x_0$. Then there exists a peak set $L$ for $A$ such that $L\subseteq U$. Set $K:=\Pi\inv(L)$ and let $x\in L$. Then, by Lemma~\ref{peak set permanence}.\ref{peak set permanence a}, $K$ is a peak set for $B$ and ${K\subseteq\Pi\inv(U)}$. Moreover, ${\mu_x(K) = \mu_x(\Pi\inv(\{x\})) = 1}$. This completes the proof.
\end{proof}

In particular, if $B$, $Y$, $\Pi$, and $T$ are as in the above theorem and $\Pi\inv(\{x\})$ is finite for each $x\in X$, then $\shilov B = \Pi\inv(\shilov A)$. (This applies when $B$ is a Cole extension of $A$.)

\smallskip

Unfortunately, generalised Cole extensions have some limitations such as the following.

\begin{proposition}
Let $B$ be a uniform algebra on a compact space $Y$. Suppose that $B$ is a generalised Cole extension of $A$ with associated maps $\Pi:Y\to X$ and $T:C(Y)\to C(X)$. If $B$ is normal on $Y$ then $A$ is normal on $X$.
\end{proposition}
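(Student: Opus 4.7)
The plan is to argue directly from the definition of normality, leveraging the averaging-type behaviour of $T$ established in Lemma~\ref{averaging equivalences}. I would begin by fixing a compact set $K \subseteq X$ and a closed set $E \subseteq X$ with $K \cap E = \emptyset$, and then pulling these back under $\Pi$. Set $L := \Pi\inv(K)$ and $F := \Pi\inv(E)$. Since $\Pi$ is continuous and $Y$ is compact, $L$ is a compact subset of $Y$ and $F$ is a closed subset of $Y$. Moreover $L \cap F = \Pi\inv(K \cap E) = \emptyset$. By the assumed normality of $B$ on $Y$, there exists $g \in B$ with $g(L) \subseteq \{1\}$ and $g(F) \subseteq \{0\}$.

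Now I would set $f := T(g)$. Since $T(B) = A$, we have $f \in A$. To verify that $f$ separates $K$ from $E$ in the required way, I invoke Lemma~\ref{averaging equivalences}.\ref{averaging equivalences d}, which applies because $T$ is unital with $\norm{T}=1$ by definition of a generalised Cole extension: for each $x \in X$,
\[
f(x) = T(g)(x) \in \cconv{g(\Pi\inv(\{x\}))}.
\]
If $x \in K$ then $\Pi\inv(\{x\}) \subseteq L$, so $g(\Pi\inv(\{x\})) \subseteq \{1\}$ and hence $f(x) = 1$. If $x \in E$ then $\Pi\inv(\{x\}) \subseteq F$, so $g(\Pi\inv(\{x\})) \subseteq \{0\}$ and hence $f(x) = 0$. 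This gives exactly the separating function required by the definition of normality, so $A$ is normal on $X$.

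There is no substantive obstacle; the only point worth double-checking is that the averaging conclusion Lemma~\ref{averaging equivalences}.\ref{averaging equivalences d} is indeed available, which it is because the definition of a generalised Cole extension bundles together the hypotheses ($T$ unital, $\norm{T}=1$, $T\circ\Pi\adj = \id_{C(X)}$) needed to apply that lemma. The argument only uses the contractive unital section property of $T$, not surjectivity of $T|B$ onto $A$ beyond confirming $f \in A$, so the proof is essentially as short as it is natural.
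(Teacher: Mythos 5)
Your proof is correct and is essentially identical to the paper's: both pull the two disjoint closed sets back under $\Pi$, apply normality of $B$ to obtain $g$ with $g(\Pi\inv(K))\subseteq\{1\}$ and $g(\Pi\inv(E))\subseteq\{0\}$, and then use the convex-hull property $T(g)(x)\in\cconv{g(\Pi\inv(\{x\}))}$ (clause \ref{averaging equivalences d} of Lemma~\ref{averaging equivalences}, recorded for generalised Cole extensions in Lemma~\ref{properties of map back GCE}) to see that $f:=T(g)\in A$ is the required separating function. No gaps.
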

\begin{proof}
Let $K_1,K_2$ be closed subsets of $X$ with $K_1\cap K_2=\emptyset$. For $j=1,2$, set ${L_j:=\Pi\inv(K_j)}$. Then $L_1$ and $L_2$ are closed subsets of $Y$ with ${L_1\cap L_2=\emptyset}$. Since $B$ is normal on $Y$, there exists $g\in B$ such that $g(L_1)\subseteq\{1\}$ and $g(L_2)\subseteq\{0\}$. Set $f:=T(g)$. Then, for each $x\in K_1$, we have
    \[
    f(x) = T(g)(x) \in \cconv{g(\Pi\inv(\{x\}))}\subseteq \cconv{g(L_1)}\subseteq\{1\},
    \]
    and, for each $x\in K_2$, we have
    \[
    f(x) = T(g)(x) \in \cconv{g(\Pi\inv(\{x\}))}\subseteq \cconv{g(L_2)}\subseteq\{0\}.
    \]
    Thus $f(K_1)\subseteq\{1\}$ and $f(K_2)\subseteq\{0\}$. It follows that $A$ is normal on $X$.
\end{proof}

In particular, we cannot hope to construct new examples of normal uniform algebras using generalised Cole extensions. See the comments following \cite[Proposition~7]{feinstein2001}, which is similar to the above; the same proof is used to prove the result for Cole extensions. A similar proof can be used to show that if $B$ has spectral synthesis then $A$ has spectral synthesis.

\subsection{Groups associated with generalised Cole extensions}
The subclass of generalised Cole extensions that are ``induced'' by the action of a compact group are especially interesting. Indeed, many of the results from the previous sections can be strengthened further in this setting, and many examples of generalised Cole extensions that exist in the literature belong to this subclass. First we recall some standard definitions and results on compact groups and continuous group actions. We refer the reader to \cite{folland2016, hewitt1979} for more information on these topics.

A \emph{compact group} is a group $G$ equipped with a compact, Hausdorff topology such that the maps $(s,t)\mapsto st:G\times G\to G$ and $s\mapsto s\inv:G\to G$ are continuous. Let $G$ be a compact group. Then there is a unique probability measure $\mu$ on $G$ such that $\mu(sE)=\mu(E)=\mu(Es)$ for all $E\subseteq G$ and $s\in G$. (Here $sE$ is defined to be $\{st:t\in E\}$, and $Es$ is defined in a similar manner.) This measure $\mu$ is called the \emph{normalised Haar measure} for $G$. We say that $G$ \emph{acts continuously} on a topological space $X$ if the action is jointly continuous.

Let $G$ be a compact group and $X$ a compact space, and suppose that $G$ acts continuously on $X$. For each $E\subseteq G$ and $F\subseteq X$, let $E\cdot F = \{s\cdot x:s\in E,x\in F\}$; in the special case where $E = \{s\}$ for some $s\in G$, we instead write $s\cdot F$; $E\cdot x$, $F\cdot E$, $F\cdot s$, and $x\cdot E$ are defined analogously. We say that a subset $E$ of $X$ is $G$-invariant if $G\cdot E = E$, and the orbit of $x\in X$ with respect to $G$ is the set $G\cdot x$, and the set of all $G$-orbits in $X$ is denoted $X/G$.

We begin with the following fundamental lemma, which is certainly well known. In several examples from the literature where generalised Cole extensions appear, some version of this lemma are proved for the specific case. We provide a proof for the general case for the convenience of the reader.

\begin{lemma}\label{averaging map from Haar integral}
Let $X,Y$ be compact spaces, let $\Pi:Y\to X$ be a continuous surjection, let $G$ be a compact group, and let $\mu$ be then normalised Haar measure on $G$. Suppose that $G$ acts continuously on $Y$ and that, for each $x\in X$ and $y\in Y$ with $\Pi(y)=x$, we have $G\cdot y=\Pi\inv(\{x\})$. For each $f\in C(Y)$, let
\begin{equation}\label{group integral averaging map}
P(f)(y) = \int_G f(s\cdot y)\dm\mu(s).
\end{equation}
Then $P$ is a continuous projection on $C(Y)$ such that $P\circ\Pi\adj=\Pi\adj$ and $\norm{P}=1$. Moreover, $P$ is averaging and the range of $P$ coincides with the set $\Pi\adj(C(X))$.
\end{lemma}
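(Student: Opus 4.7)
The plan is to verify each conclusion directly from properties of the Haar integral and the hypothesis that the $G$-orbits on $Y$ coincide with the fibres of $\Pi$. First I would check that $P$ is well defined as a map into $C(Y)$: joint continuity of the action together with compactness of $G$ gives that, for fixed $f \in C(Y)$, the function $(s,y)\mapsto f(s\cdot y)$ is uniformly continuous on $G\times Y$, so a standard $\varepsilon/\delta$ argument (or dominated convergence on nets) shows $y\mapsto \int_G f(s\cdot y)\dm\mu(s)$ is continuous. Linearity is immediate, and since $\mu$ is a probability measure, the estimate $\abs{P(f)(y)}\leq \abs{f}_Y$ yields $\norm{P}\leq 1$; evaluating at the constant function $1$ gives equality, so $\norm{P}=1$.

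Next I would establish $P\circ \Pi\adj = \Pi\adj$, which is the heart of why $P$ fixes the image of $\Pi\adj$. The key observation is that the hypothesis $G\cdot y = \Pi\inv(\{\Pi(y)\})$ forces $\Pi(s\cdot y)=\Pi(y)$ for every $s\in G$ and $y\in Y$; consequently $\Pi\adj(g)(s\cdot y)=g(\Pi(y))$, and integrating the constant in $s$ over $G$ yields $P(\Pi\adj(g))(y)=g(\Pi(y))=\Pi\adj(g)(y)$. In particular, every element of $\Pi\adj(C(X))$ lies in the range of $P$.

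I would then verify that the range of $P$ equals $\Pi\adj(C(X))$, which will simultaneously prove that $P$ is a projection. Using left-invariance of $\mu$, for any $t\in G$ one has
\[
P(f)(t\cdot y) = \int_G f(s t\cdot y)\dm\mu(s) = \int_G f(s\cdot y)\dm\mu(s) = P(f)(y),
\]
so $P(f)$ is constant on each $G$-orbit, and by hypothesis these orbits are precisely the fibres of $\Pi$. Hence $P(f)$ factors as $g\circ \Pi$ for some function $g$ on $X$. Since $\Pi$ is a continuous surjection between compact Hausdorff spaces it is a closed quotient map, and the universal property of the quotient topology forces $g\in C(X)$; thus $P(f)=\Pi\adj(g)\in \Pi\adj(C(X))$. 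Combined with $P\circ \Pi\adj = \Pi\adj$, this gives $P^2 = P$ and identifies the range.

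Finally, I would show $P$ is averaging using Kelley's criterion \cite[Theorem~2.2]{kelley1958}: it suffices to prove $P(fP(h))=P(f)P(h)$ for all $f,h\in C(Y)$. Because $P(h)$ is $G$-invariant (by the orbit argument above), for each $y\in Y$ we have
\[
P(fP(h))(y) = \int_G f(s\cdot y) P(h)(s\cdot y)\dm\mu(s) = P(h)(y)\int_G f(s\cdot y)\dm\mu(s) = P(f)(y)P(h)(y),
\]
as required. The only non-routine step I anticipate is the continuity of $P(f)$ from joint continuity of the action, and the use of the closed quotient map property of $\Pi$ to promote measurable equality $P(f)=g\circ\Pi$ to membership in $\Pi\adj(C(X))$; both are standard but worth verifying carefully to ensure no hidden hypotheses on the $G$-action are needed beyond joint continuity.
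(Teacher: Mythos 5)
Your proof is correct, and it follows the paper's argument for the bulk of the lemma (continuity of $P(f)$ via compactness and joint continuity, the norm-one estimate, invariance of the integral under translation showing $P(f)$ is constant on orbits, and the quotient-map property of $\Pi$ to get $P(f)\in\Pi\adj(C(X))$), but it diverges in two places, both to your advantage in terms of self-containedness. First, the paper proves idempotence by a separate double-integral computation $\int_G P(f)(s\cdot y)\dm\mu(s)=P(f)(y)$ using invariance of Haar measure, whereas you get $P^2=P$ for free from the two facts you already have (range contained in $\Pi\adj(C(X))$, and $P\circ\Pi\adj=\Pi\adj$, which you, unlike the paper, verify explicitly); this is a cleaner organisation of the same ingredients. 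Second, and more substantively, for the averaging property the paper appeals to its Lemma~\ref{averaging equivalences} applied to $(\Pi\adj)\inv\circ P$, whose proof in the unital norm-one case routes through Tomiyama's theorem on contractive projections; you instead verify Kelley's criterion $P(fP(h))=P(f)P(h)$ directly, pulling $P(h)(s\cdot y)=P(h)(y)$ out of the integral by orbit-invariance. Your route is more elementary and avoids Tomiyama entirely, at the cost of not exhibiting the lemma as an instance of the paper's general equivalences. One cosmetic point: the substitution $s\mapsto st$ in your computation $P(f)(t\cdot y)=P(f)(y)$ uses \emph{right} invariance of $\mu$ (as the paper notes), not left invariance as you wrote; since Haar measure on a compact group is bi-invariant this is harmless, but the label should be corrected.
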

\begin{proof}
We must first show that $P(f)$ is a continuous function on $X$. This part of the proof is effectively \cite[Theorem~19.1]{stout1971}.
Fix $\varepsilon>0$. For each ${(s,y)\in G\times Y}$, there exist open neighbourhoods $U$ of $s$ and $V$ of $y$ such that for all ${(s',y')\in U_s\times V_y}$ we have $\abs{f(s'\cdot y')-f(s\cdot y)}<\varepsilon/2$. Fix $y_0\in G$. By compactness, there exist finitely many pairs ${(s_1,y_1),\dots,(s_n,y_n)\in G\times Y}$ such that ${G\times \{y_0\}\subseteq \bigcup_{j=1}^n (U_{s_j}\times V_{y_j})}$. Set $U_j := U_{s_j}$ and $V_j := V_{y_j}$ for $j=1,\dots,n$. Let $V:=\bigcap_{j=1}^n V_j$ so that $V$ is an open neighbourhood of $y_0\in Y$. For each $j\in\{1,\dots,n\}$, put $U_j':=U_j\setminus \bigcup_{k=1}^{j-1} U_{k}$. Then for each $y\in V$ we have
\[
\int_G \abs{f(s\cdot y) - f(s\cdot y_0)}\dm\mu(s)\leq \sum_{j=1}^n \int_{U_j'}\abs{f(s\cdot y) - f(s\cdot y_0)}\dm\mu(s)<\varepsilon.
\]
It follows that $\abs{P(f)(y) - P(f)(y_0)}<\varepsilon$ for all $y\in V$. This shows that $P(f)$ is continuous on $Y$.

To see that $P$ is a projection, take $f\in C(Y)$ and $y\in Y$. Then
\begin{align*}
\int_G P(f)(s\cdot y)\dm\mu(s) &= \int_G \int_G f(t\cdot (s\cdot y))\dm\mu(t) \dm\mu(s)\\
&= \int_G\int_G f(s\cdot y)\dm\mu(s)\dm\mu(t)\\
&= \int_G P(f)(y)\dm\mu(t)\\
&= P(f)(y)
\end{align*}
by the right invariance of the normalised Haar measure.

Next, for each $f\in C(Y)$ and $y\in Y$, we have
\[
\abs{P(f)(y)}\leq \int_G \abs{f(s\cdot y)}\dm\mu(s) \leq \int_G\abs{f}_Y\dm\mu(s) = \abs{f}_Y.
\]
Thus $\abs{P(f)}_Y\leq \abs{f}_Y$. It is clear that $P(1)=1$, so that $\norm{P}=1$.

It is trivial to see that $\norm{P}=1$. To see that $P(C(Y)) = \Pi\adj(C(X))$, consider $f\in C(Y)$. Then, for each $y\in Y$ and $s\in G$, we have
\[
P(f)(s\cdot y) = \int_G f(t\cdot (s\cdot y))\dm\mu(t) = \int_G f(t\cdot y)\dm\mu(t) = P(f)(y)
\]
by the right invariance of $\mu$. In particular $P(f)$ is constant on the sets $\Pi\inv(\{x\})$ ${(x\in X)}$, and so there is a unique continuous function ${g\in C(X)}$ such that ${f = g \circ \Pi}$. Thus we have ${P(f)\in\Pi\adj(C(X))}$. The fact that $P$ is averaging now follows from Lemma~\ref{averaging equivalences} (applied to the map $(\Pi\adj)\inv\circ P$). This completes the proof.
\end{proof}

Throughout the remainder of this section, let $X, Y$ be compact spaces, let $A$ be a uniform algebra on $X$, let $B$ be a uniform algebra on $Y$, let $G$ be a compact group and let $\mu$ denote the normalised Haar measure on $G$.

\begin{definition}
Suppose that $B$ is a generalised Cole extension of $A$ with associated maps $\Pi:Y\to X$ and $T:C(Y)\to C(X)$. We say that this extension is \emph{implemented by $G$} if $G$ acts continuously on $Y$; if ${G\cdot \Pi(y)=\Pi\inv(\{\Pi(y)\})}$ for all $y\in Y$; and if, for each $f\in B$, we have
\begin{equation}\label{implemented by group averaged function}
T(f)(\Pi(y)) = \int_G f(s\cdot y)\dm\mu(s)\qquad(y\in Y).
\end{equation}
\end{definition}

Cole's counterexample to the peak point conjecture is a generalised Cole extension which is implemented by a group.

The following lemma provides some useful facts about generalised Cole extensions that are implemented by groups.

\begin{lemma}\label{useful group imp}
Suppose that $B$ is a generalised Cole extension of $A$ with associated maps $\Pi:Y\to X$ and $T:C(Y)\to C(X)$, and that $G$ acts continuously on $Y$.
\begin{enumerate}
	\item\label{useful group imp a} If $G\cdot y = \Pi\inv(\{\Pi(y)\})$ for each $y\in Y$, then $\Pi$ is an open map.
	\item\label{useful group imp b} If
	\begin{equation}\label{useful group imp b eqn}
  	T(f)(\Pi(y)) = \int_G f(s \cdot y) \dm\mu (s)\qquad (y\in Y)
  	\end{equation}
  	for all $f\in C(Y)$, then $G\cdot y = \Pi\inv(\{\Pi(y)\})$ for all $y\in Y$.
\end{enumerate}
\end{lemma}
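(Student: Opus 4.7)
To prove (a), the plan is to exploit the fact that the saturation of an open set under a continuous group action is open. Fix $U \subseteq Y$ open; the hypothesis on fibres gives
\[
\Pi\inv(\Pi(U)) = G \cdot U = \bigcup_{s \in G} s \cdot U,
\]
which is open as a union of homeomorphic images of $U$. Since $Y$ is compact and $X$ is Hausdorff, $\Pi$ is automatically a closed map, so $\Pi(Y \setminus \Pi\inv(\Pi(U))) = X \setminus \Pi(U)$ is closed in $X$, and therefore $\Pi(U)$ is open.

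For (b), the strategy is to pin down the measure $\mu_x$ associated with $T$ at $x := \Pi(y)$ via uniqueness in the Riesz representation theorem, and then to compare supports. The hypothesis rewrites as
\[
\int_Y f \dm\mu_x = \int_G f(s \cdot y) \dm\mu(s) \qquad (f \in C(Y)),
\]
so $\mu_x$ is exactly the pushforward of the Haar measure $\mu$ along the orbit map $\sigma_y \colon s \mapsto s \cdot y$. Lemma~\ref{averaging equivalences}.\ref{averaging equivalences c} gives $\supp\mu_x \subseteq \Pi\inv(\{x\})$; on the other hand, $G \cdot y = \sigma_y(G)$ is compact and hence closed in $Y$, and since Haar measure has full support on $G$ the support of $(\sigma_y)_{\ast}\mu$ is precisely $G \cdot y$. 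These two facts combine to give the inclusion $G \cdot y \subseteq \Pi\inv(\{\Pi(y)\})$ essentially for free.

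The reverse inclusion is where some care is needed and is the main point of the argument. The plan is to proceed by contradiction using Urysohn's lemma: if $y' \in \Pi\inv(\{\Pi(y)\}) \setminus G \cdot y$, choose $f \in C(Y)$ with $0 \leq f \leq 1$, $f(y') = 1$, and $f \equiv 0$ on $G \cdot y$. The integral formula applied at $y$ yields $T(f)(\Pi(y)) = 0$, and since $\Pi(y') = \Pi(y)$ the same formula, now applied at $y'$, forces $\int_G f(s \cdot y') \dm\mu(s) = 0$. However, $f \circ \sigma_{y'}$ is continuous on $G$, equals $1$ at the identity, and hence exceeds $1/2$ on an open neighbourhood $V$ of $e$; since $\mu(V) > 0$, the integral must be strictly positive, yielding the contradiction. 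The conceptual key is the full support of Haar measure, which forces the orbit to actually fill---rather than merely generically hit---the fibre.
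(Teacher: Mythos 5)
Your proof is correct, and it takes a genuinely different route from the paper's in part 2, so a comparison is worthwhile. For part 1 both arguments hinge on the same fact, namely that the saturation $\Pi\inv(\Pi(U))=\bigcup_{s\in G}s\cdot U$ of an open set $U$ is open; the paper packages this by factoring $\Pi$ through the quotient map $q:Y\to Y/G$ (which is open for exactly this reason) and noting that the induced bijection $Y/G\to X$ is a homeomorphism, whereas you argue directly, using that $\Pi$ is a closed map from a compact space to a Hausdorff space---the two are essentially equivalent, with yours avoiding the quotient space entirely (note your set identity $\Pi(Y\setminus\Pi\inv(\Pi(U)))=X\setminus\Pi(U)$ does use surjectivity of $\Pi$, which the definition supplies). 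In part 2 the paper proves the inclusion $G\cdot y\subseteq\Pi\inv(\{\Pi(y)\})$ by a Urysohn contradiction: a positive function vanishing on the fibre but near $1$ on a neighbourhood of a putative $s_0\cdot y$ outside it makes the Haar integral strictly positive, while the convex-hull property $T(f)(x)\in\cconv{f(\Pi\inv(\{x\}))}$ (Lemma~\ref{properties of map back GCE}.\ref{properties of map back GCE d}) forces $T(f)(\Pi(y))=0$; it then rules out proper inclusion by separating two orbits $F$ and $K$ inside the fibre with a second Urysohn function, obtaining $1=0$. You instead get the first inclusion by identifying $\mu_{\Pi(y)}$, via uniqueness in the Riesz representation theorem (which is legitimate precisely because the hypothesis gives the formula for \emph{all} $f\in C(Y)$, not just $f\in B$), with the pushforward $\pushforward{\sigma_y}{\mu}$ of Haar measure along the orbit map $\sigma_y:s\mapsto s\cdot y$, whose support is exactly $G\cdot y$ by compactness of the orbit and full support of Haar measure, and then quoting $\supp\mu_x\subseteq\Pi\inv(\{x\})$ from Lemma~\ref{averaging equivalences}.\ref{averaging equivalences c}; your single Urysohn contradiction for the reverse inclusion parallels the paper's second step, but localises the positivity at a neighbourhood of the identity in $G$ rather than evaluating the integral formula on two orbits to get $1=0$. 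What your route buys is conceptual clarity---it makes explicit that the averaging measures of a group-implemented extension are precisely orbit images of Haar measure, and it collapses the paper's two contradiction arguments into one---at the modest cost of invoking the support-of-a-pushforward fact; the paper's route is more elementary, needing only Urysohn's lemma and the convex-hull property, though both arguments ultimately rely on the full support of Haar measure (the paper implicitly, through $\mu(U)>0$ for a non-empty open $U\subseteq G$).
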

\begin{proof}
\ref{useful group imp a}. Suppose that we have $G\cdot y = \Pi\inv(\{\Pi(y)\})$ for all $y\in Y$. Let $q:Y\to Y/G$ denote the quotient map. Then $q$ is continuous and open, and $Y/G$ is homeomorphic to $X$, via the map $\tilde\Pi:Y/G\to X:\Pi\inv(\{x\}) \mapsto x$, and $\Pi = \tilde\Pi\circ q$. Thus $\Pi$ is an open map.

\ref{useful group imp b}. First suppose that \eqref{useful group imp b eqn} holds for all $f\in C(Y)$. Fix $x\in X$, let $y\in \Pi\inv(\{x\})$, and let $F$ be the orbit of $y$ under $G$. Suppose, towards a contradiction, that $F\not\subseteq \Pi\inv(\{x\})$. Then there exists $s_0\in G$ such that $s_0\cdot y\notin \Pi\inv(\{x\})$.

    By Urysohn's lemma, there is a function $f\in C(Y)$ with $f(Y)\subseteq[0,\infty)$ such that $f(\Pi\inv(\{x\}))\subseteq \{0\}$ and $f(s_0\cdot y) = 1$. Since the action of $G$ on $Y$ is jointly continuous, there is an open neighbourhood $U$ of $s_0$ in $G$ such that $\abs{f(s\cdot y) - 1} < 1/4$ whenever $s\in U$. 
    Then we have
    \[
    T(f)(\Pi(y)) = \int_G f(s\cdot y) \dm\mu(s) \geq \int_U f(s\cdot y) \dm\mu(s) > \frac{\mu(U)}{4} > 0. 
    \]
    But since $f(\Pi\inv(\{x\}))\subseteq \{0\}$ and $T(f)(x)\in\cconv{f(\Pi\inv(\{x\}))}\subseteq\{0\}$, we have a contradiction. Hence $F\subseteq \Pi\inv(\{x\})$.

    Now suppose, again towards a contradiction, that $F\neq \Pi\inv(\{x\})$. Take $y_0\in\Pi\inv(\{x\})\setminus F$ and let $K$ be the orbit of $y_0$ in $Y$. (By the above, $K\subseteq \Pi\inv(\{x\})$.) Then there exists $f\in C(Y)$ such that $f(F)\subseteq\{0\}$ and $f(K)\subseteq \{1\}$. Hence
    \[
    1 = \int_G f(s\cdot y_0)\dm \mu(s) = T(f)(\Pi(y_0)) = T(f)(\Pi(y)) = \int_G f(s\cdot y)\dm\mu(s) = 0 
    \]
    which is a contradiction. Hence $F = \Pi\inv(\{x\})$ and the result follows. This completes the proof.    
\end{proof}

Suppose that $G$ acts continuously on $Y$, then for each $f\in C(Y)$ and $s\in G$ we can form a new function $f_s\in C(Y)$ given by $f_s(y) = f(s\cdot y)$ $(y\in Y)$. 

\begin{proposition}\label{implementation conditions}
Suppose that $G$ acts continuously on $Y$ and that $\Pi:Y\to X$ is a continuous surjection such that $G\cdot y = \Pi\inv(\{\Pi(y)\})$ for all $y\in Y$. If $f_s\in B$ for all $f\in B$ and $s\in G$ and $\Pi\adj(C(X))\cap B = \Pi\adj(A)$, then $B$ is a generalised Cole extension of $A$ implemented by $G$.
\end{proposition}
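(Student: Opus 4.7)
The plan is to construct the linear map $T$ directly from the Haar averaging operator provided by Lemma~\ref{averaging map from Haar integral}, and then verify the four conditions (unital, norm one, $T\circ\Pi\adj=\id_{C(X)}$, and $T(B)=A$) together with the group-implementation formula.

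First I would apply Lemma~\ref{averaging map from Haar integral} to the present $\Pi$, $G$, $Y$ to obtain the projection $P:C(Y)\to C(Y)$ given by $P(f)(y)=\int_G f(s\cdot y)\dm\mu(s)$, which is a unital, norm-one, averaging projection with range $\Pi\adj(C(X))$ satisfying $P\circ\Pi\adj=\Pi\adj$. Since $\Pi$ is a continuous surjection, $\Pi\adj:C(X)\to C(Y)$ is an isometric unital embedding onto $\Pi\adj(C(X))$, so I can define $T:=(\Pi\adj)\inv\circ P:C(Y)\to C(X)$. By construction $T$ is unital with $\norm{T}=1$, and $T\circ\Pi\adj=\id_{C(X)}$ follows immediately from $P\circ\Pi\adj=\Pi\adj$ together with injectivity of $\Pi\adj$. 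The implementation identity $T(f)(\Pi(y))=\int_G f(s\cdot y)\dm\mu(s)$ is then just the identity $\Pi\adj(T(f))=P(f)$ evaluated at $y\in Y$.

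The main obstacle will be to show $T(B)=A$. The inclusion $A\subseteq T(B)$ is immediate: for $g\in A$ the function $\Pi\adj(g)$ lies in $B$ and is fixed by $P$, so $T(\Pi\adj(g))=g$. The reverse inclusion $T(B)\subseteq A$ is the delicate point. Since $P(f)$ already lies in $\Pi\adj(C(X))$, the hypothesis $\Pi\adj(C(X))\cap B=\Pi\adj(A)$ reduces the problem to showing $P(f)\in B$ for every $f\in B$. For this I would argue that the map $s\mapsto f_s:G\to C(Y)$ is norm-continuous (an equicontinuity argument using joint continuity of the action, essentially the first paragraph of the proof of Lemma~\ref{averaging map from Haar integral}), and that each $f_s\in B$ by hypothesis. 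Then the Bochner integral $\int_G f_s\dm\mu(s)$ exists as an element of the closed subspace $B$ of $C(Y)$. Applying the bounded linear functional $\delta_y$ (evaluation at $y$) to this integral and interchanging with the integral gives precisely $P(f)(y)$, so $P(f)=\int_G f_s\dm\mu(s)\in B$, as required.

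With $P(f)\in B\cap \Pi\adj(C(X))=\Pi\adj(A)$, we obtain $T(f)\in A$, completing the proof that $T(B)=A$. The hypotheses on $G$ and $\Pi$ then give exactly what is needed for the extension to be implemented by $G$ in the sense of the definition.
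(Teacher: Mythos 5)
Your proposal is correct, and its skeleton matches the paper's: both define $T$ by the Haar average $T(f)(\Pi(y))=\int_G f(s\cdot y)\dm\mu(s)$, get unitality, $\norm{T}=1$, and $T\circ\Pi\adj=\id_{C(X)}$ essentially from Lemma~\ref{averaging map from Haar integral}, and both reduce the crucial inclusion $T(B)\subseteq A$ to showing $P(f)=\Pi\adj(T(f))\in B$ for $f\in B$, after which the hypothesis $\Pi\adj(C(X))\cap B=\Pi\adj(A)$ finishes the argument. Where you genuinely diverge is the mechanism for that membership. The paper argues by duality: for each annihilating measure $\nu\in B\annih$, Fubini's theorem (legitimate since $(s,y)\mapsto f(s\cdot y)$ is jointly continuous on $G\times Y$) gives $\int_Y \Pi\adj(T(f))\dm\nu=\int_G\int_Y f_s\dm\nu\dm\mu(s)=0$ because each $f_s\in B$, and then the standard Hahn--Banach fact that a closed subspace is the pre-annihilator of its annihilator yields $\Pi\adj(T(f))\in B$. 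You instead realise $P(f)$ as the Bochner integral $\int_G f_s\dm\mu(s)$ of the norm-continuous map $s\mapsto f_s$ (continuity by the same equicontinuity argument as in the proof of Lemma~\ref{averaging map from Haar integral}), which lies in the closed subspace $B$ because every $f_s$ does, and evaluation functionals pass through the integral to identify it with $P(f)$. Both routes are sound and use the hypotheses in the same way; the paper's scalar Fubini argument is slightly more elementary in that it avoids vector-valued integration, while yours is more structural---it exhibits the average as a limit of convex combinations of the $f_s$, so the same one-line argument shows that \emph{any} closed $G$-invariant subspace of $C(Y)$ is invariant under $P$. One detail worth making explicit in your write-up: strong measurability of $s\mapsto f_s$ holds because its range is compact, hence separable, so the Bochner integral does exist even when $G$ is not metrizable.
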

\begin{proof}
Suppose that $\Pi\adj(C(X))\cap B = \Pi\adj(A)$ and that, for all $f\in B$ and $s\in G$, we have $f_s\in B$. Let $T:C(Y) \to C(X)$ be the norm $1$ linear map given by
\[
T(f)(\Pi(y)) = \int_G f(s\cdot y)\dm\mu(s)\qquad (y\in Y, f\in C(Y)).
\]
Then $T$ is a unital and, since $G\cdot y = \Pi\inv(\{\Pi(y)\})$ for all $y\in Y$, it follows that $T\circ\Pi\adj = \id_{C(X)}$. It remains only to show that $T(B) = A$. Let $f\in B$ and let $g = T(f)\in C(X)$. Then, if $\nu$ is an annihilating measure for $B$, then by Fubini's theorem
\[
\int g(\Pi(y))\dm\nu(y) = \int_Y\int_G f_s(y)\dm\mu(s)\dm\nu(y) = \int_G\int_Y f_s(y)\dm\nu(y)\dm\mu(s) = 0.
\]
It follows that $\Pi\adj(g)\in B\cap \Pi\adj(C(X))$ and so $g\in A$. This completes the proof.
\end{proof}

Note that the condition $\Pi\adj(C(X))\cap B = \Pi\adj(A)$ in the above is necessary, as mentioned below Proposition~\ref{properties of map back GCE}. The condition that $f_s\in B$ whenever $f\in B$ and $s\in G$, in the above, is seemingly stronger than \eqref{implemented by group averaged function}. For generalised Cole extensions that satisfy this additional condition we can apply results of Bj\"ork \cite{MR0328604} and Izzo \cite{MR2648073} to obtain the following proposition.

\begin{proposition}\label{gceg extra properties}
Suppose that $B$ is a generalised Cole extension implemented by $G$, with associated maps $\Pi:Y\to X$ and $T:C(Y)\to C(X)$. Suppose that, for all $f\in B$ and $s\in G$, we have $f_s\in B$.
\begin{enumerate}
	\item\label{gceg extra properties a} The action of $G$ on $Y$ can be extended to a continuous action of $G$ on $\charspace B$.
	\item\label{gceg extra properties b} For each $x\in X$, if $B_{\Pi\inv(\{x\})}$ is natural on $\Pi\inv(\{x\})$ then $B_{\Pi\inv(\{x\})} = C(\Pi\inv(\{x\}))$.
	\item\label{gceg extra properties c} If $A$ is natural on $X$, $\Pi\adj(A)$ separates the $G$-orbits in $\charspace{B}$, and, for each $x\in X$, $B_{\Pi\inv(\{x\})}$ is natural on $\Pi\inv(\{x\})$, then $B$ is natural on $Y$.
	\item\label{gceg extra properties d} We have $B = C(Y)$ if and only if $B$ is natural on $Y$ and $A = C(X)$.
	\item\label{gceg extra properties e} We have $\shilov B = \Pi\inv(\shilov A)$.
\end{enumerate}
\end{proposition}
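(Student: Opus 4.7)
The plan is to handle the five clauses in order, since \ref{gceg extra properties c}--\ref{gceg extra properties e} build on \ref{gceg extra properties a} and \ref{gceg extra properties b}. Parts \ref{gceg extra properties a} and \ref{gceg extra properties b} set up structural facts about the $G$-action on $\charspace B$ and on the fibre algebras; \ref{gceg extra properties c} combines \ref{gceg extra properties b} with a character-space argument and a result of Izzo; \ref{gceg extra properties d} follows from \ref{gceg extra properties b} together with a Stone--Weierstrass argument; and \ref{gceg extra properties e} is the climax.

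For \ref{gceg extra properties a}, I would verify that each map $\alpha_s\colon f\mapsto f_{s\inv}$ is a unital, isometric algebra automorphism of $B$ (using the hypothesis that $f_s\in B$ whenever $f\in B$), so the dual maps $\alpha_s\adj$ define a $G$-action on $\charspace B$ extending the given action on $Y$ via the natural embedding, with joint continuity following from standard weak-$*$ arguments. For \ref{gceg extra properties b}, fix $x\in X$; the assumption $G\cdot y=\Pi\inv(\{\Pi(y)\})$ forces $G$ to act transitively on $\Pi\inv(\{x\})$, and $B_{\Pi\inv(\{x\})}$ inherits the $G$-invariance of $B$. Bj\"ork's theorem \cite{MR0328604}, that a natural $G$-invariant uniform algebra on a transitive compact $G$-space equals $C$ of that space, then gives the conclusion.

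For \ref{gceg extra properties c}, take $\varphi\in\charspace B$; then $\varphi\circ\Pi\adj\in\charspace A$, which by naturality of $A$ equals $\pe x$ for some $x\in X$. Hence $\varphi$ agrees with every point evaluation $\pe y$ (for $y\in \Pi\inv(\{x\})$) on $\Pi\adj(A)$. The orbit-separation hypothesis forces $\varphi$ into the $G$-orbit of some $\pe y$, and combined with the naturality of $B_{\Pi\inv(\{x\})}$ and a result of Izzo \cite{MR2648073} one concludes $\varphi=\pe{y'}$ for some $y'\in\Pi\inv(\{x\})$. Part \ref{gceg extra properties d} has a trivial forward direction: $B=C(Y)$ is automatically natural, and $A=T(C(Y))$ contains every $g\in C(X)$ because $g=T(\Pi\adj(g))$. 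For the converse, naturality of $B$ passes to each fibre algebra (as noted before Lemma~\ref{Jensen pushforward}), so \ref{gceg extra properties b} gives $B_{\Pi\inv(\{x\})}=C(\Pi\inv(\{x\}))$; Stone--Weierstrass then yields $B=C(Y)$, with self-adjointness of $B$ established by approximating $\bar f$ fibrewise in each $B_{\Pi\inv(\{x\})}$ and patching using a partition of unity drawn from $A=C(X)$ and pulled back via $\Pi\adj$, with Lemma~\ref{fibre neighbourhood lemma} used to propagate fibrewise control to tubular neighbourhoods.

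For \ref{gceg extra properties e} there are two inclusions. For $\Pi\inv(\shilov A)\subseteq\shilov B$: $\Pi$ is open by Lemma~\ref{useful group imp}.\ref{useful group imp a}, so Theorem~\ref{shilov boundary extensions} gives $\shilov{B_{\Pi\inv(\{x\})}}\subseteq\shilov B\cap\Pi\inv(\{x\})$ for $x\in\shilov A$; the Shilov boundary of the $G$-invariant fibre algebra is a non-empty closed $G$-invariant subset of the transitive $G$-space $\Pi\inv(\{x\})$, hence the whole fibre. For $\shilov B\subseteq\Pi\inv(\shilov A)$: $\shilov B$ is $G$-invariant (because $G$ acts on $B$ by isometric automorphisms), so $\shilov B=\Pi\inv(F)$ with $F=\Pi(\shilov B)$ a closed subset of $X$; this $F$ is automatically a closed boundary for $A$ (since $\Pi\adj$ is isometric), hence $F\supseteq\shilov A$, and the reverse containment $F\subseteq\shilov A$ I would derive from a theorem of Izzo \cite{MR2648073} on Shilov boundaries of $G$-invariant algebras. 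The hardest step is precisely this final containment: direct attempts via the peak-set criterion of the preceding theorem stall because a peak set $K\subseteq\Pi\inv(U)$ for $B$ need not meet any fibre $\Pi\inv(\{x\})$ in positive $\mu_x$-measure, since $\mu_x$ is Haar-type on the orbit $\Pi\inv(\{x\})$ while $K$ may intersect the orbit in a set of Haar measure zero; the alternative route of showing $\Pi\inv(F')$ is a closed boundary for $B$ whenever $F'$ is one for $A$ is blocked because the continuous function $x\mapsto\abs{f}_{\Pi\inv(\{x\})}$ need not lie in $A$, and averaging $f$ via $T$ can annihilate the non-trivial isotypic components of $f$ under the $G$-representation. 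This is where the heavier machinery of Izzo's analysis of $G$-invariant uniform algebras becomes indispensable.
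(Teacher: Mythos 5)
Your overall skeleton matches the paper's (dualise the action for \ref{gceg extra properties a}; reduce \ref{gceg extra properties b} to the fact that a natural invariant uniform algebra on a transitive compact $G$-space is trivial; derive the rest from orbit-space structure), but several of your routes are genuinely more elementary, and correctly so. For \ref{gceg extra properties c} the paper invokes Bj\"ork's Theorem~1.2 to get a homeomorphism $X\to\charspace{B}/G$ and then uses fibre naturality; your argument is more direct and complete as sketched: Gelfand transforms of elements of $\Pi\adj(A)$ are constant on $G$-orbits in $\charspace{B}$ and take the value $g(x)$ on every orbit lying over $x$, so the orbit-separation hypothesis forces $G\cdot\varphi=G\cdot\pe{y}$ for $y\in\Pi\inv(\{x\})$, and every element of that orbit is literally $\pe{s\cdot y}$ --- note this actually renders your closing appeal to fibre naturality and ``Izzo'' in \ref{gceg extra properties c} superfluous. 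For \ref{gceg extra properties d} the paper argues by contradiction through weak peak sets; your patching argument (choose $g_x\in B$ with $\abs{g_x-f}_{\Pi\inv(\{x\})}<\varepsilon$, spread the estimate to a tube $\Pi\inv(U_x)$ via Lemma~\ref{fibre neighbourhood lemma}, patch with $\Pi\adj(\chi_i)$ for a partition of unity $\{\chi_i\}\subseteq C(X)=A$) is a complete proof of density and does not even need Stone--Weierstrass or the self-adjointness detour. Similarly, your proof of $\Pi\inv(\shilov{A})\subseteq\shilov{B}$ in \ref{gceg extra properties e} (the Shilov boundary of the invariant fibre algebra is a non-empty closed $G$-invariant subset of a transitive $G$-space, hence the whole fibre; then apply Theorem~\ref{shilov boundary extensions}, with openness of $\Pi$ from Lemma~\ref{useful group imp}.\ref{useful group imp a}) is a correct direct argument where the paper simply cites Bj\"ork.

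The one genuine problem is the hard containment $\shilov{B}\subseteq\Pi\inv(\shilov{A})$ in \ref{gceg extra properties e}. You delegate it to ``a theorem of Izzo \cite{MR2648073} on Shilov boundaries of $G$-invariant algebras'', but no such theorem is in that paper: Izzo's results there concern a \emph{transitive} action on the whole space --- a single orbit, which is exactly the fibrewise situation you already handled --- and culminate in the statement (Corollary~1.4) that a natural invariant algebra on a transitive $G$-space is trivial, which is what clause \ref{gceg extra properties b} needs. (Your attributions in \ref{gceg extra properties b} and \ref{gceg extra properties e} are in fact swapped: ``transitive $+$ natural $\Rightarrow$ trivial'' is Izzo's 2010 result, not Bj\"ork's; Bj\"ork's 1973 paper does not contain it.) The result you need for \ref{gceg extra properties e} is Bj\"ork's Theorem~1.2 \cite{MR0328604}, on a compact group acting globally (non-transitively) on a commutative Banach algebra: it identifies the character space of the fixed-point algebra with $\charspace{B}/G$ and yields the corresponding identification of Shilov boundaries. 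Here the fixed algebra is $B\cap\Pi\adj(C(X))=\Pi\adj(A)$ (the remark after Lemma~\ref{properties of map back GCE}), so $\shilov{A}$ is identified with $\Pi(\shilov{B})$, and $G$-saturation of $\shilov{B}$ gives equality; this is precisely how the paper disposes of \ref{gceg extra properties e}. Your diagnosis of why elementary attempts stall is accurate --- averaging through $T$ annihilates the non-trivial isotypic components, e.g.\ $T(p^n)=0$ for all $n\geq 1$ in Basener's example, so peak-set and boundary criteria phrased via $\mu_x$ lose all information about $\abs{f}_{\Pi\inv(\{x\})}$ --- so the repair is only a change of citation, but as written the crucial step rests on a theorem that does not exist where you claim it does.
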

\begin{proof}
\ref{gceg extra properties b}. For each $s\in G$ and $\varphi\in\charspace B$, let $\varphi_s$ be the character on $B$ given by $\varphi_s(f) = \varphi(f_s)$ $(f\in B)$. This action is obviously continuous and agrees with the action of $G$ when restricted to $Y$.

\ref{gceg extra properties b}. Let $x\in X$. Suppose that $B_{\Pi\inv(\{x\})}$ is natural on $\Pi\inv(\{x\})$. Since $\Pi\inv(\{x\})$ is a $G$-orbit, it follows that $f_s\in B_{\Pi\inv(\{x\})}$ whenever $f\in B_x$. The result follows from \cite[Corollary~1.4]{MR2648073}.

\ref{gceg extra properties c}. Suppose that $A$ is natural on $X$ and that, for each $x\in X$, $B_{\Pi\inv(\{x\})}$ is natural on $\Pi\inv(\{x\})$. Then, by \cite[Theorem~1.2]{MR0328604}, there is a homeomorphism $\tau:X\to \charspace B/G$, where the action of $G$ on $\charspace B$ is as in the proof of clause \ref{gceg extra properties a}. Let $\varphi\in \charspace B$. Then there is a unique $x\in X$ such that $\tau(G\cdot \varphi) = x$. It follows that $\varphi$ belongs to the character space of $B_{\Pi\inv(\{x\})}$, which is $\Pi\inv(\{x\})$, and so $\varphi$ is a point evaluation at some $y\in\Pi\inv(\{x\})$. The result follows.

\ref{gceg extra properties d}. If $B = C(Y)$ then $B$ is natural on $Y$ and $A = C(X)$ (by Proposition~\ref{properties of map back GCE}). Conversely, suppose that $B$ is natural on $Y$ and $A =C(X)$. Then each point $x\in X$ is a strong boundary point for $A$ and so $\Pi\inv(\{x\})$ is a peak set in the weak sense, by Proposition~\ref{GCE property correspondence b}. Hence $B_{\Pi\inv(\{x\})}$ is closed in $C(\Pi\inv(\{x\}))$ for all $x\in X$. Now if $B\neq C(Y)$, then there exists $x\in X$ such that $B_{\Pi\inv(\{x\})}\neq C(\Pi\inv(\{x\}))$ and, by \ref{gceg extra properties b}, this implies that $B_{\Pi\inv(\{x\})}$ is not natural on $\Pi\inv(\{x\})$ for some $x\in X$. But since $B$ is natural on $Y$, it follows that $B_{\Pi\inv(\{x\})}$ is natural on $\Pi\inv(\{x\})$. This is a contradiction. Hence $B = C(Y)$.

\ref{gceg extra properties e}. This follows from \cite[Theorem~1.2]{MR0328604} using a similar argument to that in the proof of \ref{gceg extra properties c}.
\end{proof}

In general, a generalised Cole extension of $A$ need not be implemented by a group. However, in a special case it is possible to show that there is a finite group that implements the extension. In fact, a generalised Cole extension of $A$ that satisfies this extra condition and which is generated by ``adjoining'' a single function to $A$ is necessarily a Cole extension of $A$.

\begin{theorem}\label{thm when gce is ce}
Suppose that $B$ is a generalised Cole extension of $A$ with associated maps $\Pi:Y\to X$ and $T:C(Y)\to C(X)$. Suppose further that ${\norm{\id_{C(Y)} - \Pi\adj\circ T} = 1}$. Then there exists a finite group $G$ such that $B$ is a generalised Cole extension of $A$ implemented by $G$. Moreover, if there exists $h_0\in B$ with $T(h_0) = 0$ such that $B$ is the uniform algebra generated by $\Pi\adj(A)$ and $h_0$ then there exists a homeomorphism ${\psi:Y\to X^q}$, where $q(t) = t^2 - h$ for some $h\in A$, such that $\Pi_q\circ\psi = \Pi$, $\psi\adj(A^q) = B$, and $\psi\adj(p_q) = h_0$. 
\end{theorem}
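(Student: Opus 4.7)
My plan is to set $P := \Pi\adj\circ T$ and exploit the fact that $P$ is a unital contractive projection on the commutative $C\adj$-algebra $C(Y)$ with $\norm{\id_{C(Y)} - P} = 1$, i.e., a \emph{unital bicontractive} projection. By the solution of the bicontractive/symmetric projection problem for unital $C\adj$-algebras (as discussed in \cite{blecher2015}), such a $P$ admits the representation $P = \tfrac{1}{2}(\id_{C(Y)} + V)$, where $V$ is a period-two $*$-automorphism of $C(Y)$. By the commutative Gelfand--Naimark theorem, $V = \tau\adj$ for some involutive homeomorphism $\tau : Y \to Y$. I would then set $G := \{\id_Y, \tau\}$ with normalised Haar measure $\mu$ giving weight $1/2$ to each element; by construction, $\int_G f(s\cdot y)\dm\mu(s) = \tfrac{1}{2}(f(y) + f(\tau(y))) = P(f)(y)$ for all $f\in C(Y)$.

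Next I would verify the orbit--fibre coincidence $G\cdot y = \Pi\inv(\{\Pi(y)\})$. The range of $P$ equals $\Pi\adj(C(X))$ (the algebra of functions constant on fibres of $\Pi$) and, since $P = \tfrac{1}{2}(\id + \tau\adj)$, it also equals the algebra of $\tau$-invariant functions (those constant on $G$-orbits). Every orbit is contained in a fibre because elements of $\Pi\adj(C(X))$ are $\tau$-invariant and $C(X)$ separates points of $X$; conversely, if $y_1,y_2$ share a fibre but lie in distinct $G$-orbits then Urysohn's lemma on the compact Hausdorff quotient $Y/G$ produces a $\tau$-invariant function separating $y_1$ from $y_2$, a contradiction. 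Combined with $T(f)(\Pi(y)) = P(f)(y) = \int_G f(s\cdot y)\dm\mu(s)$, this shows that $B$ is a generalised Cole extension of $A$ implemented by $G$.

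For the second statement, assume $h_0\in B$ with $T(h_0)=0$ generates $B$ together with $\Pi\adj(A)$. Since $P(h_0)=\Pi\adj(T(h_0))=0$, we have $\tau\adj(h_0) = -h_0$, hence $h_0^2$ is $\tau$-invariant and so lies in $\Pi\adj(C(X))$. As $h_0^2\in B$ and $B\cap \Pi\adj(C(X))=\Pi\adj(A)$ (noted after Lemma~\ref{properties of map back GCE}), there exists $h\in A$ with $h_0^2 = \Pi\adj(h)$. Setting $q(t)=t^2-h$, I would define $\psi : Y \to X^q$ by $\psi(y):=(\Pi(y),h_0(y))$; the identity $h_0(y)^2 = h(\Pi(y))$ makes $\psi$ well-defined and continuous, and the identities $\Pi_q\circ\psi = \Pi$ and $\psi\adj(p_q)=h_0$ are immediate.

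The final task is to show $\psi$ is a homeomorphism with $\psi\adj(A^q)=B$. Surjectivity follows from surjectivity of $\Pi$ and the fact that $\tau$ interchanges the two values $\pm z$ of a square root of $h(\Pi(y))$. Injectivity is the delicate point: if $\psi(y_1)=\psi(y_2)$ then $y_1,y_2$ share a fibre, so $y_2\in\{y_1,\tau(y_1)\}$; the only non-trivial case, $y_2=\tau(y_1)\ne y_1$, forces $h_0(y_1) = -h_0(y_1) = 0$, but then every polynomial in $h_0$ with coefficients in the $\tau$-invariant algebra $\Pi\adj(A)$ takes the same value at $y_1$ and $\tau(y_1)$, and uniform density contradicts the fact that $B$ separates points of $Y$. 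A continuous bijection between compact Hausdorff spaces is a homeomorphism, and $\psi\adj(A^q)=B$ follows because $A^q$ is generated by $p_q$ and $\Pi_q\adj(A)$, whose images under $\psi\adj$ are $h_0$ and $\Pi\adj(A)$. The main obstacle is the initial appeal to the bicontractive projection theorem to extract $\tau$; once $\tau$ is in hand, the remainder is essentially direct verification.
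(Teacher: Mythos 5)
Your proposal is correct and follows essentially the same route as the paper: both extract the period-two automorphism $\theta = 2P - \id_{C(Y)}$ from the unital bicontractive projection $P = \Pi\adj\circ T$ via \cite[Lemma~4.1]{blecher2015}, realise it as $\tau\adj$ for an involutive homeomorphism $\tau$ of $Y$, take $G = \{\id_Y,\tau\}$ with Haar measure giving each element weight $1/2$, and define $\psi(y) = (\Pi(y), h_0(y))$. The only (harmless) deviations are that you prove the orbit--fibre coincidence directly via Urysohn's lemma on $Y/G$ where the paper invokes Lemma~\ref{useful group imp}, and you deduce $h_0^2 \in \Pi\adj(A)$ from $\tau$-invariance of $h_0^2$ together with $B\cap\Pi\adj(C(X)) = \Pi\adj(A)$ rather than citing \cite[Lemma~4.1]{blecher2015} a second time, while also spelling out the injectivity and surjectivity details that the paper leaves terse.
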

\begin{proof}
Let $P := \Pi\adj\circ T$ and $Q := \id_{C(Y)} - P$. Then $P$ is a unital projection on $C(Y)$ and $\norm{P} = \norm{Q} = 1$. Set $\theta := P - Q$, so that $P = (\id_{C(Y)} + \theta)/2$. By \cite[Lemma~4.1]{blecher2015}, $\theta$ is a homomorphism with $\theta\circ\theta = \id_{C(Y)}$. Hence $\theta$ is an automorphism and is isometric by \cite[Proposition~1.5.28]{dales2000} (this fact is contained in \cite[Theorem~4.5]{blecher2015}). Thus there is a homeomorphism $\rho:Y\to Y$ such that $\theta = \rho\adj$, and we have $P = (\id_{C(Y)} + \theta)/2$.

Let $G := \{\id_Y, \rho\}$; under composition and given the discrete topology this is a compact group. Let $\mu$ denote normalised Haar measure on $G$. Note that $\mu(\{\id_Y\}) = \mu(\{\rho\}) = 1/2$, and $G$ acts continuously on $Y$. Thus, for each $f\in C(Y)$, we have
\[
\int_G f(s\cdot y) \dm \mu(s) = \frac12 (f(y) + f(\rho(y))) = P(f)(y)\qquad (y\in Y).
\]
By Lemma~\ref{useful group imp}.\ref{useful group imp b}, we have $G\cdot y = \Pi\inv(\{\Pi(y)\})$ for all $y\in Y$ and it follows that $B$ is a generalised Cole extension of $A$ implemented by $G$.

Suppose now that there exists $h_0\in B$ with $T(h_0) = 0$ and such that $B$ is generated by $\Pi\adj(A)$ and $h_0$. Again by \cite[Lemma~4.1]{blecher2015}, we have $h_0^2\in \Pi\adj(A)$, so there exists $h\in A$ such that $h_0^2 = \Pi\adj(h)$. Set $q(t) = t^2 - h$. We must now prove that $B$ is isomorphic to $A^q$.

Define the map $\psi:Y\to X\times X^q$ by $\psi(y) = (\Pi(y),h_0(y))$. It is not hard to see that $\psi$ is surjective, and it is immediate that $\Pi_q\circ\psi = \Pi$. We \emph{claim} that $\psi$ is injective. Indeed, suppose that $y_1,y_2\in Y$ with $y_1\neq y_2$ and $\psi(y_1) = \psi(y_2)$. Then we have $\Pi(y_1)=\Pi(y_2)$ and $h_0(y_1)=h_0(y_2)$. We have $y_1=\rho(y_2)$ and so $h_0(y_1) = h_0(y_2) = -h_0(y_1)$. Hence $h_0(y_2) = h_0(y_1) = 0$. This contradicts the fact that $B$ separates the points of $Y$, and so no such points can exist. Hence $\psi$ is injective, and so is a homeomorphism.

The function $\psi\adj:C(X^q)\to C(Y)$ is an isometric isomorphism such that $\Pi\adj = \psi\adj\circ\Pi_q\adj$. Finally, we must prove that $\psi\adj(A^q) = B$. Since $p_q\circ\psi(y) = h_0(y)$ for each $y\in Y$, so that $\psi\adj(p_q) = h_0$, and $B$ is generated by $\Pi\adj(A)$ and $h_0$ and $A^q$ is generated by $\Pi_q\adj(A)$ and $p_q$. This completes the proof.
\end{proof}

Given two functions $g_1,g_2\in A$ with disjoint support, and then form Cole extensions $(A^{q_1})^{q_2}$, where $q_1(t) = t^2 - g_1$ and $q_2(t) = t^2 - \Pi_{q_1}\adj(g_2)$, then it is not hard to see that this uniform is not generated by the embedding of $A$ and a single element, but does satisfy all the other conditions of the above. We have not been able to show that all generalised Cole extensions that satisfy the conditions above are necessarily the Cole extensions, although we conjecture that this is the case.

In the above, we do not use the full power of \cite[Lemma~4.1]{blecher2015}, and so our result can be slightly generalised to obtain the following theorem.

\begin{theorem}
Suppose that $P$ is a unital projection on $A$ such that both $P$ and $\id_A - P$ have norm $1$. Let $A_0 = P(A)$. Then there is a finite group $G$ such that $G$ acts continuously on $\charspace{A}$. If $A_0$ separates the $G$-orbits in $\charspace A$, then $A_0$ can be viewed as a natural uniform algebra on $\charspace A/G$. Moreover, in this case, $A$ is a generalised Cole extension of $A_0$ implemented by $G$. If, in addition, $X$ is a $G$-invariant subset of $\charspace A$, then $A_0$ is a uniform algebra on $X/G$.
\end{theorem}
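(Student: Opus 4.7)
The approach mirrors the proof of Theorem~\ref{thm when gce is ce}, working on the character space of $A$. First, apply \cite[Lemma~4.1]{blecher2015}: since $P$ is unital with $\norm{P} = \norm{\id_A - P} = 1$, the map $\theta := 2P - \id_A$ is a unital homomorphism satisfying $\theta\circ\theta = \id_A$, hence a bijective unital endomorphism. It is automatically isometric by \cite[Proposition~1.5.28]{dales2000}, so its dual map $\rho:\charspace A \to \charspace A$, defined by $\rho(\varphi) := \varphi\circ\theta$, is a homeomorphism with $\rho\circ\rho = \id_{\charspace A}$. Take $G := \{\id_{\charspace A}, \rho\}$ with the discrete topology; this is a finite (hence compact) group acting continuously on $\charspace A$, and its normalised Haar measure $\mu$ assigns weight $1/2$ to each element.

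A direct calculation gives $\widehat a \circ \rho = \widehat{\theta(a)}$ for every $a \in A$, so the averaging map on $C(\charspace A)$ associated with the $G$-action sends $\widehat a$ to $\widehat{P(a)}$. In particular, $\widehat{A_0}$ coincides with the $G$-fixed subalgebra of $\widehat A$. Now assume $A_0$ separates the $G$-orbits in $\charspace A$, and let $q:\charspace A \to \charspace A / G$ denote the quotient map onto the compact Hausdorff orbit space. Every function in $\widehat{A_0}$ factors uniquely through $q$ into $C(\charspace A / G)$, yielding an injective, unital, isometric image that is closed in $C(\charspace A / G)$ and separates its points; this realises $A_0$ as a uniform algebra on $\charspace A / G$.

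The delicate step is naturality of this realisation, for which I would invoke \cite[Theorem~1.2]{MR0328604}: $\widehat A$ is natural on $\charspace A$, $G$ acts on $\charspace A$ through isometric automorphisms of $\widehat A$, and by hypothesis the fixed subalgebra $\widehat{A_0}$ separates the $G$-orbits; the fibre-naturality condition required by Bj\"ork's theorem is automatic here since each $G$-orbit has at most two points. Once naturality is in hand, the generalised Cole extension structure falls out by assembling the pieces: $q$ is the associated continuous surjection, the linear map $T:C(\charspace A) \to C(\charspace A / G)$ induced by the Haar integral is unital with $\norm{T} = 1$, $T\circ q\adj = \id_{C(\charspace A / G)}$ and $T(\widehat A) = \widehat{A_0}$, and the averaging formula $T(f)(q(\varphi)) = \int_G f(s\cdot\varphi)\dm\mu(s)$ holds by construction. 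Via the Gelfand transform this realises $A$ as a generalised Cole extension of $A_0$ implemented by $G$.

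For the final clause, if $X$ is a $G$-invariant subset of $\charspace A$ then $X/G$ is a compact Hausdorff subspace of $\charspace A/G$. Functions in $A_0$ are $G$-invariant on $\charspace A$ and hence on $X$, so they descend to continuous functions on $X/G$; separation of $G$-orbits on $\charspace A$ restricts to separation on $X$; and restriction to $X$ is isometric on $A$ (since $A$ is a uniform algebra on $X$), so the image of $A_0$ is a closed unital subalgebra of $C(X/G)$ separating its points, which is what is required. The principal obstacle in the plan is the correct application of Bj\"ork's theorem to secure naturality on the quotient; the remaining steps are a direct transcription of the group-theoretic portion of the proof of Theorem~\ref{thm when gce is ce}.
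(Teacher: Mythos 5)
Your proposal is correct and follows essentially the same route as the paper: the Blecher--Neal lemma applied to $\theta = 2P-\id_A$ yields an isometric period-two automorphism whose dual homeomorphism $\rho$ generates $G=\{\id_{\charspace A},\rho\}$, $A_0$ is identified with the $G$-fixed subalgebra viewed on $\charspace A/G$, and the extension structure is then assembled from the Haar average. The only divergence is in presentation: where the paper simply asserts naturality and concludes by citing Proposition~\ref{implementation conditions}, you verify the associated maps directly (via the identity $T(\widehat a)=\widehat{P(a)}$, which bypasses the annihilating-measure argument) and explicitly invoke \cite[Theorem~1.2]{MR0328604} for naturality, correctly noting that the fibre condition is automatic for orbits of at most two points---if anything a more complete account than the paper's own sketch.
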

\begin{proof}
As in the proof of Theorem~\ref{thm when gce is ce}, let $\theta = P - Q$, where $Q = \id_A - P$. Then $\theta$ is an (isometric) automorphism of $A$ with $\theta\circ\theta = \id_{A}$, and hence there is a homeomorphism $\rho$ of $\charspace A$ with $\rho\adj|A = \theta$. Let $G = \{\id_{\charspace A},\rho\}$. It is easy to see that $G$ acts continuous on $\charspace A$.

Suppose now that $A_0$ separates the $G$-orbits in $\charspace A$. Let $q:\charspace A\to \charspace A/G$ be the quotient map. Then we have $G\cdot \varphi = q\inv(q(\varphi))$ for all $\varphi\in \charspace A$, and $A_0$ consists precisely of those functions $f$ in $A$ such that $\widehat f$ is constant on each $G$ orbit in $\charspace A$, and so $A_0$ can be viewed as a natural uniform algebra on $\charspace A/G$, and $q\adj(C(\charspace A/G))\cap A = A_0$. By Proposition~\ref{implementation conditions}, $A$ is a a generalised Cole extension of $A_0$ implemented by $G$.
\end{proof}

\subsection{Examples of generalised Cole extensions}
\label{sec:examples of gces}
We now give some examples of generalised Cole extensions from the literature. First, we note that the bidual of a generalised Cole extension of a uniform algebra $A$ is a generalised Cole extension of $A\ddual$.

Let $B$ be a uniform algebra on a compact space $Y$. Suppose that $B$ is a generalised Cole extension of $A$ with associated maps $\Pi$ and $T$. It is interesting to note that $C(Y)$ is also a generalised Cole extension of $C(X)$, with associated maps $\Pi$ and $T$, and that $B\ddual$ (the bidual of $B$) is a generalised Cole extension of $A\ddual$. In the latter, the associated maps are given by the map ${\tau:\charspace{C(Y)\ddual}\to\charspace{C(X)\ddual}}$ induced by ${\pi^{***}:C(X)\ddual\to C(Y)\ddual}$, and the linear map ${T\dadj:C(Y)\ddual\to C(X)\ddual}$. (Note that bidual of a uniform algebra is again a uniform algebra.) We refer the reader to \cite{cole1982} for similar comments and results.

\subsubsection{Basener's counterexample to the peak point conjecture}
In \cite{basener1971}, Basener gave a simple counter example to the peak point conjecture using a compact subset of $\C^2$. We now show that this example can be realised as a generalised Cole example. We refer the reader to \cite[Example~19.8]{stout1971} for a detailed account of the construction.

If $X$ is a compact subset of $\C^n$, for some $n\in\N$, then $R(X)$ denotes the closure in $C(X)$ of the algebra of all restrictions to $X$ of rational functions on with no poles on $X$. It is clear that $R(X)$ is a uniform algebra on $X$ for each such $X$. If $X$ is a compact subset of $\C$, then $R(X)$ is always natural on $X$.

For the remainder of the section, fix a compact subset $K$ in the open unit disk in $\C$ such that $R(K)$ is non-trivial and $K$ does not admit any non-trivial Jensen measures. Set $A:=R(K)$ and set
\[
X_K:=\{(z,w)\in\C^2: z\in K, \abs{z}^2+\abs{w}^2 = 1\}.
\]
Then $X_K$ is a non-empty, compact subset of $\C^2$. (In fact, it is a compact subset of the unit sphere in $\C^2$.) Set $B:=R(X_K)$. Let $\Pi:X_K\to K$ and $p:X_K\to\C$ be the restrictions of the coordinate projections. 
Clearly $B$ is a uniform algebra extension of $A$. For each $z\in K$, we have
\[
\Pi\inv(\{z\})=\Bigl\{w\in\C: \abs{w} = \sqrt{\cramped{1-\abs{z}^2}}\Bigr\},
\]
which is a circle in $\C$. In particular $B|\Pi\inv(\{z\})$ is dense in $C(\Pi\inv(\{z\}))$ for each $z\in K$ since $B$ contains $p$ and $p\inv$. In fact, $B$ is generated by the functions $p$, $p\inv$, and $\Pi\adj(g)$ $(g\in A)$. It is a trivial fact that every point of $X_K$ is a peak point for $B$. In particular, $\shilov B = X_K$. Some additional features of the construction are outlined in the following proposition. We give a proof of these facts using the techniques of this paper; a direct proof of clause \ref{basener prop a} is given in \cite{stout1971}.

\begin{proposition}\label{basener prop}
\begin{enumerate}
	\item\label{basener prop a} The algebra $B$ is natural on $X_K$.
	\item\label{basener prop b} The circle group $\T$ acts continuously on $X_K$, via the formula 
	\[
	s\cdot (z,w) = (z,sw) \qquad (s\in \T, (z,w)\in X_K).
	\]
	\item\label{basener prop c} The algebra $B$ is a generalised Cole extension of $A$ implemented by $\T$.
\end{enumerate}
\end{proposition}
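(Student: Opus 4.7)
The plan is to dispatch the three clauses in order; clause (a) reduces to the Jensen-measure machinery of Theorem~\ref{Jensen measure triviality thm}, clause (b) is a routine verification, and clause (c) follows from Lemma~\ref{averaging map from Haar integral} together with a density argument on an explicit subalgebra.

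For clause (a), I would invoke Theorem~\ref{Jensen measure triviality thm}. By assumption, $A=R(K)$ admits no non-trivial Jensen measures, so it remains to show the same for each $B_{\Pi\inv(\{z\})}$. Since $K$ lies in the open unit disk, $\sqrt{1-\abs{z}^2}>0$, so $p$ never vanishes on $X_K$ and both $p$ and $p\inv$ lie in $B$. On the fibre $\Pi\inv(\{z\})$---the circle of radius $\sqrt{1-\abs{z}^2}$---we have $\bar p=(1-\abs{z}^2)/p$, while $\Pi\adj(A)|\Pi\inv(\{z\})$ consists of constants. Thus $B|\Pi\inv(\{z\})$ contains a subalgebra which separates points, contains the constants, and is closed under complex conjugation, so by the complex Stone--Weierstrass theorem $B_{\Pi\inv(\{z\})}=C(\Pi\inv(\{z\}))$. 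A short bump-function argument shows that Jensen measures for $C(X)$ on any compact $X$ are trivial, so Theorem~\ref{Jensen measure triviality thm} yields the naturality of $B$ on $X_K$. Clause (b) is a direct verification: $\abs{sw}=\abs{w}$ preserves the defining relation of $X_K$, joint continuity is immediate from the definition, and the group axioms are clear.

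For clause (c), note that the orbit of $(z,w)$ under $\T$ is $\{(z,sw):s\in\T\}=\Pi\inv(\{z\})$. Define $T:C(X_K)\to C(K)$ by
\[
T(f)(z)=\int_\T f(z,sw)\dm\mu(s),
\]
where $(z,w)$ is any point in $\Pi\inv(\{z\})$; translation-invariance of Haar measure $\mu$ renders this well-defined. Applying Lemma~\ref{averaging map from Haar integral} to $P:=\Pi\adj\circ T$, one obtains that $T$ is unital, satisfies $\norm{T}=1$, and $T\circ\Pi\adj=\id_{C(K)}$. The crucial remaining point is $T(B)=A$. Let $B_0$ be the subalgebra of $B$ generated by $\Pi\adj(A)$, $p$, and $p\inv$, which is dense in $B$ by the explicit generator statement in the paragraph preceding the proposition. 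A typical element of $B_0$ has the form $f=\sum_{k=-m}^{n}\Pi\adj(g_k)p^k$ with $g_k\in A$, and direct computation gives
\[
T(f)(z)=\sum_{k=-m}^{n}g_k(z)w^k\int_\T s^k\dm\mu(s)=g_0(z)\in A,
\]
since $\int_\T s^k\dm\mu(s)=\delta_{k,0}$. Hence $T(B_0)\subseteq A$ and, by continuity of $T$ and closedness of $A$, $T(B)\subseteq A$; the reverse inclusion is immediate from $T\circ\Pi\adj=\id_{C(K)}$. The defining formula for $T$ exhibits the generalised Cole extension as being implemented by $\T$.

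The only mildly subtle point is clause (a), where one needs both $p$ and $p\inv$ in $B$ to complete the Stone--Weierstrass argument; this is where the hypothesis that $K$ lies in the \emph{open} unit disk (so that $p$ is invertible in $B$) enters. Clause (c) is then clean thanks to the Laurent-type generators for $B$: the Fourier-coefficient computation kills every term but the constant-in-$w$ part, which is exactly the element of $A$.
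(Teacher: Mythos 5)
Your proof is correct, and clauses (a) and (b) follow the paper's own argument: the paper proves (a) by exactly the same reduction to Theorem~\ref{Jensen measure triviality thm} via the density of $B|\Pi\inv(\{z\})$ in $C(\Pi\inv(\{z\}))$ (you flesh out the Stone--Weierstrass step with $\bar p=(1-\abs{z}^2)/p$ and the bump-function triviality of Jensen measures for $C(E)$, both of which the paper leaves implicit), and (b) is dismissed as clear in both treatments. For clause (c), however, you take a genuinely different route. The paper does not construct $T$ directly: it verifies the two hypotheses of Proposition~\ref{implementation conditions}, namely that $f_s\in B$ for all $f\in B$ and $s\in\T$ (checked on the generators $p$, $p\inv$, $\Pi\adj(g)$) and that $\Pi\adj(C(K))\cap B=\Pi\adj(A)$ (via the same vanishing-moments computation $\int_\T [ws]^n\dm\mu(s)=\delta_{n,0}$, done there by Cauchy's theorem rather than your Fourier-coefficient identity), and then lets that proposition supply $T$ and the Fubini/annihilating-measure argument for $T(B)=A$. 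You instead define $T$ by the Haar integral, get unitality, $\norm{T}=1$, and $T\circ\Pi\adj=\id_{C(K)}$ from Lemma~\ref{averaging map from Haar integral}, and prove $T(B)=A$ by direct computation on the dense Laurent-type subalgebra $B_0$, with continuity and closedness of $A$ finishing the job. Your route is more self-contained (no duality or Fubini argument needed) and makes the mechanism $T(\sum_k\Pi\adj(g_k)p^k)=g_0$ transparent; the paper's route buys something you do not record, namely the stronger $\T$-invariance $f_s\in B$ for all $s\in\T$, which the paper highlights immediately after the proof because it is the hypothesis needed to apply Proposition~\ref{gceg extra properties} (e.g.\ to conclude $\shilov B=\Pi\inv(\shilov A)$ and the naturality dichotomies). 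Since you in effect verify invariance only of the generators implicitly through the computation, if you want those downstream consequences you should note, as the paper does, that $p_s=sp$, $(p\inv)_s=s\inv p\inv$, and $(\Pi\adj(g))_s=\Pi\adj(g)$ give $f_s\in B$ for all $f\in B$.
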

\begin{proof}
\ref{basener prop a}. Since $B|\Pi\inv(\{z\})$ is dense in $C(\Pi\inv(\{z\}))$ for all $z\in K$, the result follows from Theorem~\ref{Jensen measure triviality thm}.

\ref{basener prop b}. This is clear from the definition.

\ref{basener prop c}. The $\T$-orbits in $X_K$ are precisely the fibres of $\Pi$, by definition. Clearly, for all $s\in G$, we have $p_s = sp\in B$ and $(p\inv)_s = s\inv p\inv\in B$ and $(\Pi\adj(g))_s =\Pi\adj(g)$ for all $g\in A$ and $s\in G$. Since $B$ is generated by the functions $p$, $p\inv$, $\Pi\adj(g)$ ($g\in A$), it quickly follows that $f_s\in B$ for all $f\in B$ and $s\in G$. Fix $(z,w)\in X_K$ and $n\in\Z$. Then we have
\[
\int_\T [ws]^n \dm\mu(s) = \frac{1}{2\pi} \int_{-\pi}^{\pi}\biggl[\sqrt{1 - \abs{z}^2}\re^{\ri(\theta + t)}\biggr]^n\dm t,
\]
where $\mu$ denotes normalised Lebesgue measure on $\T$, and $\theta$ is the principle argument of $w$. It follows that
\[
\int_\T [ws]^n \dm\mu(s) = \frac{1}{2\pi\ri}\int_\gamma \zeta^{n-1}\dm\zeta
\]
where $\gamma$ is the path describing the circle with radius $\sqrt{1 - \abs{z}^2}$ once anticlockwise. Cauchy's theorem now tells us that this integral is zero whenever $n\neq 0$, and it is $1$ if $n=0$. It follows that $\Pi\adj(C(K))\cap B = \Pi\adj(A)$. Hence $B$ is a generalised Cole extension of $A$ implemented by $\T$ by Proposition~\ref{implementation conditions}.
\end{proof}

In fact, in the above, we showed that $B$ satisfies the stronger property that $f_s\in B$ for all $f\in B$ and $s\in \T$.

\subsubsection{Factorisation in commutative Banach algebras}
A more recent construction of a generalised Cole extension is given in \cite{dales2018}, where uniform algebras in which certain null sequences factor. We summarise some of the details of their construction below.

Let $A$ be a uniform algebra on a compact space $X$, fix $x_0\in X$, and let $M_{x_0}$ be the maximal ideal of all functions in $A$ that vanish at $x_0$. Fix a null sequence $(f_n)\in M_{x_0}$. Let $Y$ be the set of all triples $(x, (z_n), w)\in X\times\C^\N\times \C$ satisfying the following conditions:
\begin{enumerate}
	\item $z_nw = f_n(x)$ $(n\in\N)$;
	\item $\abs{w} = \max\{\abs{f_n(x)}^{1/2}:n\in\N\}$;
	\item $\abs{z_n}^2\leq \abs{f_n(x)}$ $(n\in\N)$.
\end{enumerate}
Then $Y$ is a compact space. Let $B$ be the uniform algebra on $Y$ generated by the functions $\Pi\adj(f)$, $p_n$ $(n\in\N)$, and $q$, where $\Pi$, $p_n$ $(n\in\N)$, and $q$ are the obvious coordinate projections. Clearly $B$ is a uniform algebra extension of $A$. In fact, $B$ is a generalised Cole extension of $A$ implemented by $\T$.  Moreover, the fibre of $\Pi$ over $x_0$ consists of a single point. (Extensions such as the above where there is a point $x_0$ whose fibre consists of a single point are called \emph{distinguished point extensions} in \cite{dales2018}.)

Note that, as for Basener's construction, this construction also satisfies the  stronger condition that $f_s\in B$ for all $f\in B$ and $s\in \T$.

\section*{Open problems}
We conclude with two open problems. In the following $X$ and $Y$ are compact spaces, $A$ is a uniform algebra on $X$, and $B$ is a uniform algebra on $Y$.

\begin{question}
Suppose that $B$ is a uniform algebra extension of $A$ with associated map $\Pi:Y\to X$, and that $P$ is a continuous projection on $B$ with $P(B)=\Pi\adj(A)$. Is $B$ a generalised Cole extension of $A?$
\end{question}

\begin{question}
Suppose that $B$ is a generalised Cole extension of $A$. When is this extension implemented by a compact group $G$ acting continuously on $Y?$
\end{question}

\begin{acknowledgements}
I would like to thank Dr.~J.~Feinstein for many helpful discussions on uniform algebras and Cole extensions, and for suggesting the topic of generalised Cole extensions to me.
\end{acknowledgements}

\bibliographystyle{abbrvnat}

\end{document}